\newif\ifdraft\drafttrue
\tikzset{>= angle 60}
\newtheorem{theorem}{Theorem}
\newtheorem{lemma}{Lemma}[section]
\newtheorem{proposition}{Proposition}
\theoremstyle{definition}
\newtheorem{definition}{Definition}
\def\eps{\varepsilon}
\def\R{{\mathbb R}}
\def\Q{{\mathbb Q}}
\def\N{{\mathbb N}}
\def\Z{{\mathbb Z}}
\def\bx{{\bfx}}
\def\lf{ \lfloor}
\def\rf{ \rfloor}
\def\lc{ \lceil}
\def\rc{ \rceil}
\def\frac  #1#2{\mkern 1mu {{#1} \over {#2}}\mkern 1mu }
\def \bfit {\bfseries \itshape }
\def \bfx {{\hbox{\bfit x}\mkern 1mu }} 
\def \bfy {{\hbox{\bfit y}\mkern 1mu }} 
\def \bfs {{\hbox{\bfit s}\mkern 1mu }}
\def \bfc {{\hbox{\bfit c}\mkern 1mu }}
\begin{document}

\title[On the Diophantine nature of the elements of Cantor sets]{On the 
Diophantine nature of the elements of Cantor sets arising in the 
dynamics of contracted rotations}

\author{ Yann Bugeaud, Dong Han Kim, Michel Laurent and Arnaldo Nogueira}

\thanks{DK was supported by the National Research Foundation of Korea (NRF-2018R1A2B6001624).}
%and the research program of Dongguk University, 2019.}

\maketitle

\begin{abstract}
We prove that these Cantor sets are made up of transcendental numbers, apart from their endpoints $0$ 
and $1$, under some arithmetical assumptions on the data. 
To that purpose, we establish a criterion of linear independence over the field of algebraic 
numbers for  the  three numbers $1$, a characteristic Sturmian number, 
and an arbitrary Sturmian number with the same slope.
\end{abstract}

\footnote{\footnote \rm 2010 {\it Mathematics Subject Classification:}   
11J91,   37E05. }

%\date{\today}

\section{Introduction and results} 

A notorious open question in Diophantine approximation, formulated 
by Mahler \cite{Mah84}, is to decide whether 
the middle-third Cantor set 
$$
{\mathcal C}= \left\{ \sum_{k\ge 1} {x_k\over 3^k} ; \, x_k\in\{0,2\}, \,\forall k \ge 1 \right\}, 
$$
defined as the set of real numbers in $[0, 1]$
having no digit $1$ in their ternary expansion, contains irrational algebraic elements. 
Since one commonly believes that irrational algebraic numbers are normal to base $3$
(as to every other integer base), 
it is expected that the answer to Mahler's question is negative, 
but we are very far away from being able 
to prove this. Recall that ${\mathcal C}$ can as well be dynamically defined: 
it is the attractor of the iterated function system (IFS)
$\{\phi_0, \phi_1\}$, where
$$
\phi_0 (x) = {\frac {1}{3}} x, \quad
\phi_1 (x) = {\frac {1}{3}} x + \frac {2}{3}. 
$$
The main goal of the present paper is to exhibit a 
family of Cantor sets in $[0, 1]$ which also naturally arise in dynamics, 
precisely in the dynamics of contracted rotations, and 
for which we can prove that they contain no  algebraic elements, except $0$ and $1$. 

Throughout this paper, $\lf x \rf$ denotes the integer part,  
$\lc x \rc$ the upper integer part, and $\{ x \}$ the 
fractional part of a real number $x$. 

\begin{definition} 
Let $\lambda$ and $\delta$ be real numbers with $0< \lambda<1$ and $1-\lambda < \delta <1$. 
We call the map $f_{\lambda,\delta}$ defined by
$$
f=f_{\lambda,\delta}:x\in [0, 1) \mapsto \{ \lambda x+\delta\} 
$$
a  contracted rotation of $[0, 1)$. 
\end{definition}

In particular, the map $f_{\lambda,\delta}$ is a {\it 2-interval piecewise affine contraction} 
on the interval $I = [0, 1)$.  
Figure~1 shows the graph of $f_{\lambda,\delta}$.  %%d

\begin{figure}[ht]
\begin{tikzpicture}
\draw (0,0) -- (5,0);
\draw (0,0) -- (0,5);
\draw[thick] (0,4) -- (2.5,5);
\draw[thick] (2.5,0) -- (5,1);
\node   at (-0.3,-0.3) {$0$};
\node at (2.5,-0.6) {$\frac{1-\delta}{\lambda}$};
\node   at (-0.5,5) {$1$};
\node   at (-0.5,4) {$\delta$};
\draw[dashed] (0,1) -- (5,1);
\node   at (-1.2,1) {$\lambda + \delta -1$};
\node   at (5,-0.4) {$1$};
\draw[dashed] (2.5,0) -- (2.5,5);
\end{tikzpicture}
\caption{ A plot of $f_{\lambda, \delta}: I \to I$}
\end{figure}

The following facts concerning the dynamics  of contracted rotations 
have been established in 
\cite{Bu, BuC, Ha, JaOb, LN, LNB}. Each map $f= f_{\lambda,\delta}$ has a {\it rotation number} 
$\theta=\theta_{\lambda,\delta}$ satisfying $0 < \theta < 1$. 
For any given $\lambda$ in $(0, 1)$, this rotation number is rational for almost all values of 
$\delta$ in $(1 - \lambda, 1)$. 
In particular, it has been proved in \cite{LN} that $\theta$ is rational when 
$\lambda$ and $\delta$ are 
both algebraic. On the other hand, 
if $ \theta $ takes an irrational value, then the closure of  the limit set  $\bigcap_{n\ge 1}f^n(I)$,
where $f^n$ stands for the $n$-th iterate  of $f$,   is a Cantor set $C= C_{\lambda,\delta}$.
Then, any orbit $(f^n(x))_{n\ge 0}$ asymptotically 
approaches of an orbit contained in $C$, as $n$ tends to 
infinity. 

Moreover, for any real number $\lambda$ and any irrational number $\theta$  with 
$0 < \lambda, \theta <1$, there exists one and only one value $\delta$ with $1-\lambda < \delta <1$ 
for which $\theta_{\lambda,\delta}= \theta$. This number $\delta$ is given by the series
\begin{equation}
\delta = \delta(\lambda,\theta) 
= (1-\lambda)\left(1+\sum_{k\ge 1} (\lf (k+1)\theta \rf -\lf k\theta \rf )\lambda^k \right).   
 \label{delta} 
 \end{equation}

We are concerned with the Diophantine nature of the elements of 
the Cantor set $C_{\lambda, \delta(\lambda, \theta)}$.  It is easily observed that 
the endpoints of $C_{\lambda, \delta(\lambda,\theta)}$ are $0$ and $1$. 
Whenever $\lambda$ is an arbitrary algebraic number 
and $\theta $ an arbitrary irrational number, we expect that $0$ and $1$ 
are the only algebraic numbers contained in  
$C_{\lambda, \delta(\lambda,\theta)}$.   
Our first theorem confirms this expectation when $\lambda$ is the reciprocal of an integer.

\begin{theorem}
Let $b$ be an integer with $b \ge 2$ and $\theta$ an irrational number  with $0 < \theta <1$. Put 
$\lambda = 1/b$ and $ \delta = \delta(\lambda,\theta)$. Then, any element of the 
Cantor set $C_{\lambda,\delta}$ 
which differs from its endpoints $0$ and $1$ is a transcendental number. 
\end{theorem}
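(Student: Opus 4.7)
The plan is to realise every element of $C_{\lambda,\delta}$ as a Sturmian number in base $b$ of slope $\theta$, and then to derive the transcendence statement from the linear independence criterion announced in the abstract.

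First, I would establish an explicit series expansion for the points of $C_{\lambda,\delta}$. Since the restriction of $f$ to $C_{\lambda,\delta}$ is conjugate to the irrational rotation of angle $\theta$, every $x\in C_{\lambda,\delta}$ admits a unique infinite backward orbit under $f$; let $\eta_k(x)\in\{0,1\}$ record the branch of continuity used at the $k$-th backward step. Iterating the two inverse branches $y\mapsto b(y-\delta)$ and $y\mapsto b(y+1-\delta)$ of $f$ and using that $f^{-n}(x)$ remains bounded in $[0,1)$, I would obtain the base-$b$ expansion
\[
x \;=\; \frac{\delta}{1-\lambda}\;-\;\sum_{k\ge 0}\eta_k(x)\,\lambda^k.
\]
The sequence $(\eta_k(x))_{k\ge 0}$ is itself Sturmian of slope $\theta$, since it codes the orbit of a point of the circle under the inverse rotation, so the sum equals a Sturmian number $s_{\theta,\rho(x)}$ in base $b$, with intercept $\rho(x)$ determined by $x$. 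On the other hand, specialising \eqref{delta} to $\lambda=1/b$ yields $\delta/(1-\lambda)=1+c_\theta$, where
\[
c_\theta \;:=\; \sum_{k\ge 1}(\lf(k+1)\theta\rf-\lf k\theta\rf)\,b^{-k}
\]
is the characteristic Sturmian number of slope $\theta$ in base $b$.

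Combining the two displays yields the clean identity $x \;=\; 1 + c_\theta - s_{\theta,\rho(x)}$. Suppose now, for a contradiction, that $x\in C_{\lambda,\delta}\setminus\{0,1\}$ is algebraic. Setting $\alpha := 1-x\in\overline{\Q}$, I obtain the non-trivial $\overline{\Q}$-linear relation
\[
\alpha\cdot 1 \;+\; 1\cdot c_\theta \;-\; 1\cdot s_{\theta,\rho(x)} \;=\; 0,
\]
in which the intercept $\rho(x)$ lies outside the countable exceptional set corresponding to the endpoints $x=0$ and $x=1$. The linear independence criterion advertised in the abstract rules out any such relation for non-exceptional intercepts and thus supplies the desired contradiction.

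The main obstacle will be the criterion itself, which is the technical core of the paper. Transcendence of a single Sturmian number has been classical for some time via the Schmidt subspace theorem in the spirit of Adamczewski--Bugeaud; however, linear independence of $1$, $c_\theta$ and $s_{\theta,\rho}$ for two Sturmian numbers with the \emph{same} slope is considerably more delicate and requires exploiting their shared combinatorial structure. The expected route is to use that any Sturmian word of slope $\theta$ inherits the rich repetitions of the characteristic word of that slope---palindromic prefixes and long almost-periodicities dictated by the continued fraction expansion of $\theta$---in order to build families of simultaneously good rational approximations of $c_\theta$ and $s_{\theta,\rho}$. Feeding these approximations into the subspace theorem and showing that every exceptional projective subspace is forced into the explicit degenerate locus corresponding to the endpoints will be the crux of the argument.
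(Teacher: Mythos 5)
Your overall strategy is the same as the paper's: parametrize the points of $C_{\lambda,\delta}$ so that each one reads $x = 1 + \xi_0 + (\text{integer}) - \xi_1$ with $\xi_0$ the characteristic Sturmian number of slope $\theta$ in base $b$ and $\xi_1$ a Sturmian number of the same slope whose intercept is determined by $x$, and then invoke the linear independence criterion (Theorem 2). However, there is a genuine gap in the way you dispose of the exceptional intercepts. The criterion fails precisely when the intercept $\rho$ is congruent to $j\theta$ modulo one for some integer $j$, and this exceptional set is the whole orbit $\Z\theta+\Z$ reduced modulo one: it is dense in $[0,1)$ and corresponds to countably many points of the Cantor set, namely the endpoints of the deleted gaps (in the paper's notation, the points $\phi(\{m\theta\})$ and the left limits $\phi(\{m\theta\}^-)$), not merely to $x=0$ and $x=1$ as you assert. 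For such an $x$ the number $s_{\theta,\rho(x)}$ genuinely is a rational combination of $1$ and $c_\theta$ (this is exactly the easy ``only if'' direction of Theorem 2), so your relation $\alpha + c_\theta - s_{\theta,\rho(x)} = 0$ yields no contradiction, and the argument as written proves nothing for these countably many points of $C_{\lambda,\delta}$.

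These points do require a separate argument, which the paper supplies: using the explicit formulae of Lemma 4.1 one gets $\phi(\{m\theta\}) = \frac{1-b^{-m}}{1-b^{-1}}\,\delta(1/b,\theta) + A_m$ with $A_m$ rational and the coefficient of $\delta$ a nonzero rational for $m \neq 0$, and then one uses the transcendence of $\delta(1/b,\theta)$, which follows from the Laurent--Nogueira theorem quoted in Section 1 (the rotation number is rational whenever $\lambda$ and $\delta$ are both algebraic). One also needs to handle the closure points $\phi(\{l\theta\}^-)$, which are not attained by the coding of $\bigcap_n f^n(I)$ itself but differ from $\phi(\{l\theta\})$ by the rational quantity $\lambda^{l-1}(1-\lambda)$. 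Your approach could be repaired along the same lines (for exceptional intercepts, write $x$ as a rational combination of $1$ and $c_\theta$, check that the coefficient of $c_\theta$ vanishes only at the endpoints, and invoke the transcendence of a single Sturmian number), but as it stands the step ``the exceptional set corresponds to the endpoints $0$ and $1$'' is false and leaves the gap endpoints of the Cantor set untreated.
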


A key tool for the proof of Theorem 1 is a result of independent interest on linear 
independence of Sturmian numbers of the same slope. 

Sturmian words are, by definition, the infinite aperiodic words of minimal complexity. 
They can be described as follows. 
Let $\theta$ and $\rho$ be real numbers with
$0 \le \theta, \rho < 1$ and $\theta$ irrational.
For $n \ge 0$, set
$$
s_n := \big\lfloor (n+1) \theta + \rho \big\rfloor -
\big\lfloor n \theta + \rho \big\rfloor,
\quad
s'_n := \big\lceil (n+1) \theta + \rho \big\rceil -
\big\lceil n \theta + \rho \big\rceil. 
$$
Then, the infinite words
$$
{\bfs}_{\theta, \rho} := s_0 s_1 s_2 \ldots,
\quad
{\bfs}'_{\theta, \rho} := s'_0 s'_1 s'_2 \ldots 
$$
are, respectively, the lower and upper Sturmian words with slope $\theta$
and intercept $\rho$. 
Observe that ${\bfs}_{\theta, 0}$ and ${\bfs}'_{\theta, 0}$
differ only by their first letter, thus, there exists an infinite word ${\bfc}_{\theta}$, called the 
characteristic Sturmian word with slope $\theta$,  such that 
$$
{\bfs}_{\theta, 0} = 0 {\bfc}_{\theta}, \quad  {\bfs}'_{\theta, 0} = 1 {\bfc}_{\theta}. 
$$ 
Classical references on Sturmian words include 
\cite[Chapter~6]{Fogg02} and \cite[Chapter~2]{Loth02}.
If ${\bf a}$ and ${\bf b}$ are two distinct symbols, then a Sturmian sequence over $\{{\bf a} , {\bf b}\}$ 
is obtained from a Sturmian sequence over $\{0, 1\}$ by replacing $0$ by ${\bf a}$ and $1$ 
by ${\bf b}$. 

Let $b$ be an integer with $b \ge 2$. 
For an infinite word ${\bfx} = x_1 x_2 x_3 \dots$  
over the alphabet $\{0, 1, \ldots , b-1\}$,
set 
$$
\xi_{\bfx} = \sum_{n=1}^\infty {x_n \over b^n}.     
$$
If ${\bfx}$ is a Sturmian word (of slope $\theta$ and intercept $\rho$) over 
two elements of the alphabet $\{0, 1, \ldots , b-1\}$, then we call 
$\xi_{\bfx}$ a Sturmian number (of slope $\theta$ and intercept $\rho$). 

Ferenczi and Mauduit \cite{FeMa97} established that every 
Sturmian number is transcendental. Their proof combines combinatorial 
properties of Sturmian words with a result from 
Diophantine approximation. Further progress has been made in the understanding 
of the combinatorial properties of Sturmian words, firstly by 
Berth\'e, Holton, and Zamboni \cite{BeHoZa06}, and subsequently by 
Bugeaud and Kim \cite{BuKim19}. 

A careful analysis of the auxiliary lemmas obtained in \cite{BuKim19} allows us to go one step 
further and to decide whether two Sturmian numbers of the same slope 
are  linearly independent. 

\begin{theorem} 
Let $b$ be an integer with $b \ge 2$. Let ${\bf a}, {\bf b}$ be distinct integers in $\{0, 1, \ldots , b-1\}$. 
Let $\theta$ and $\rho$ be real numbers with
$0 \le \theta , \rho < 1$ and $\theta$ irrational. 
Let $\xi_0$ be a real number whose $b$-ary expansion is  a characteristic 
Sturmian sequence of slope $\theta$ over $\{{\bf a}, {\bf b}\}$. 
Let $\xi_1$ be a real number whose $b$-ary expansion is a  
Sturmian sequence of slope $\theta$ and intercept $\rho$ over $\{{\bf a}, {\bf b}\}$. 
The real numbers $1, \xi_0, \xi_1$ are linearly independent over the field 
of algebraic numbers if and only if there does not exist an integer $j$ 
such that $\rho$
%the intercept of ${\bfx}$
is congruent to $j \theta$ modulo one. 
\end{theorem}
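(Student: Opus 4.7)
The \emph{only if} direction is elementary: if $\rho \equiv j\theta \pmod 1$ for some $j \in \Z$, then $\bfs_{\theta,\rho}$ (or ${\bfs}'_{\theta,\rho}$) differs from $\bfc_\theta$ by a finite shift, and a direct computation shows that either $\xi_1 = b^{k}\xi_0 - r$ or $\xi_1 = b^{-k}\xi_0 + r$ for some non-negative integer $k$ and some rational $r$, giving a $\Q$--linear relation among $1,\xi_0,\xi_1$ (and \emph{a fortiori} an $\overline{\Q}$--linear one).

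For the \emph{if} direction, assume $\rho$ avoids the orbit $\{j\theta \bmod 1 : j \in \Z\}$ and, for contradiction, that $\alpha_0 + \alpha_1\xi_0 + \alpha_2\xi_1 = 0$ with algebraic $\alpha_i$ not all zero. Since $\xi_0$ is transcendental by Ferenczi--Mauduit, $\alpha_2 \neq 0$, so $\xi_1 = \beta_0 + \beta_1\xi_0$ for algebraic $\beta_0,\beta_1$ with $\beta_1\neq 0$. The plan is to apply Schmidt's Subspace Theorem to a sequence of \emph{simultaneous} rational approximations to $(\xi_0,\xi_1)$ extracted from a careful re-reading of the auxiliary lemmas of \cite{BuKim19}. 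For each convergent $p_n/q_n$ of $\theta$, the characteristic word $\bfc_\theta$ has a long near-periodic prefix of period $q_n$, providing a rational $\pi_n^{(0)}/Q_n$ very close to $\xi_0$ with $Q_n$ a multiple of $b^{q_n}-1$. The refinement I need is that, at a matching position, $\bfs_{\theta,\rho}$ admits an analogous near-periodic prefix of the \emph{same} period $q_n$, producing a second rational $\pi_n^{(1)}/Q_n$ with the same denominator that is simultaneously very close to $\xi_1$; the quality of both approximations is of order $Q_n^{-(1+\mu_n)}$ with $\mu_n>0$ controlled by the continued-fraction expansion of $\theta$, uniformly in $\rho$.

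Feeding the triples $(Q_n,\pi_n^{(0)},\pi_n^{(1)})$ into the Subspace Theorem, with three linear forms measuring respectively the size of the triple and the two approximation errors, forces infinitely many of them onto a fixed proper rational subspace; passing to the limit yields a $\Q$--linear relation $c_0 + c_1\xi_0 + c_2\xi_1 = 0$ with $(c_0,c_1,c_2)\in\Q^3\setminus\{0\}$. If this rational relation is not proportional to the standing algebraic relation $\alpha_0 + \alpha_1\xi_0 + \alpha_2\xi_1 = 0$, then $1,\xi_0,\xi_1$ span a one-dimensional $\overline{\Q}$--vector space, forcing $\xi_0$ to be algebraic and contradicting Ferenczi--Mauduit. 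Hence the two relations are proportional, and in particular $\beta_0,\beta_1$ are already rational.

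The hardest step, and the one I expect to occupy the bulk of the work, is then to extract the precise arithmetic shape of $\beta_0,\beta_1$ from the explicit combinatorial form of the simultaneous approximations. Varying the scale $n$ along the continued fraction of $\theta$, and interchanging the ``lower'' and ``upper'' variants of the near-periodic prefix, one obtains many integer identities of the form $\pi_n^{(1)} = \beta_0 Q_n + \beta_1 \pi_n^{(0)}$ between the approximants; integrality and asymptotic analysis of the digits force $\beta_1$ to be an integral power $b^{\pm k}$ of the base and $\beta_0$ to be a rational of exactly the shape produced by a finite shift of $\bfc_\theta$ by $k$ positions. This finally implies $\rho \equiv \pm k\theta \pmod 1$, contradicting the standing hypothesis on $\rho$.
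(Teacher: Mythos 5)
Your overall architecture does match the paper's (simultaneous rational approximations with a common near-period coming from the machinery of \cite{BuKim19}, the Subspace Theorem to reduce $\overline{\Q}$-linear independence to $\Q$-linear independence, and a combinatorial step identifying the intercept), but two of your steps have genuine gaps. First, the Subspace Theorem application as you describe it is not valid: the two linear forms ``measuring the approximation errors'' would be $\xi_0X_0-X_1$ and $\xi_1X_0-X_2$, whose coefficients are transcendental, and Schmidt's theorem requires algebraic coefficients. The only algebraic data available is the assumed relation itself, and even after replacing your forms by the single admissible form $\beta_0X_0+\beta_1X_1-X_2$ (equivalently, the paper's form with coefficient $\alpha_0\xi_0+\alpha_1\xi_1$, algebraic by hypothesis), a purely archimedean product is nowhere near $H^{-\delta}$: the simultaneous approximation quality is only of order $Q^{-1-\mu}$ with $\mu$ that can be close to $\eps=1/10$ (it degrades precisely when the suffix length $r_k=|U_k|$ is much larger than $q_k$, which happens for unbounded partial quotients), so the product of three archimedean forms is of size roughly $Q^{1-O(\eps)}$, not small. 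The paper's proof works because it feeds in the four-dimensional integer points $(b^{r_k+q_k},\,b^{r_k},\,m_k^0b^{r_k},\,m_k)$ \emph{and} the $\ell$-adic absolute values at the primes $\ell$ dividing $b$: it is the $b$-power structure of three of the four coordinates that cancels the archimedean growth and leaves the margin $b^{-\eps(r_k+q_k)}$. Your sketch mentions neither the $p$-adic places nor this coordinate splitting, and without them the step fails.

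Second, the part you yourself flag as ``the hardest step'' is exactly the technical heart of the paper, and your proposal only names the desired conclusion. Deducing from a $\Q$-linear relation (equivalently, from the identities between approximants) that the slope-shift must be literal --- i.e.\ that $\beta_1=b^{\pm k}$ and hence $\rho\equiv \pm k\theta \pmod 1$ --- requires the precise combinatorial structure of the approximations: the unique decomposition $\bfx=U_k(M_k)^{d_{k+1}}M_{k-1}M_kM_k\cdots$ (the paper's Proposition 1, resting on Lemma 7.2 of \cite{BuKim19}), the choice of $r_k,t_k$ and the proof that the set of good indices is infinite (Lemmas 2.3--2.5, themselves a case analysis on the partial quotients), and then the argument that an eventual identity $r_k+t_k=q_{k+1}+C$ forces $|U_k|$ to equal $p$ or $q_{k+1}\pm p$ for infinitely many $k$, whence $\sigma^p(\bfx)=\bfc_\theta$ or $\sigma^p(\bfc_\theta)=\bfx$. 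Nothing in your outline explains why common-denominator approximants exist ``at a matching position'' for an arbitrary intercept, nor what mechanism pins down the shift; ``integrality and asymptotic analysis of the digits'' is an assertion, not an argument. (Your reduction to rational $\beta_0,\beta_1$ by proportionality of the two relations is fine, and mirrors the paper's reduction of Theorem 2 to its Theorem 3; but Theorem 3 itself is what still has to be proved, and that is where the real work lies.)
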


The `only if' part is easy. Indeed, assume that $\theta$ in $(0, 1)$ is an irrational number, $\rho$
is a real number and $j, k$ are integers such that $\rho = j \theta + k$. Then, 
for any positive integer $n$, we have 
$$
\big\lfloor (n - j +1) \theta + \rho \big\rfloor -
\big\lfloor (n - j) \theta + \rho \big\rfloor
= \big\lfloor (n+1) \theta   \big\rfloor -
\big\lfloor n \theta   \big\rfloor,
$$
and similarly if $\lfloor \cdot \rfloor$ is replaced by $\lceil \cdot \rceil$. 
This shows that, in this case, $\xi_1$ can be written as a linear combination of $1$ and $\xi_0$ with 
rational coefficients. 

It may well happen that `linearly independent over the field 
of algebraic numbers' could be replaced by `algebraically independent' in the theorem above, but 
such a result seems to be by far out of reach.

\section{Combinatorial properties of Sturmian words}

Let $\theta = [0; a_1, a_2, \ldots ]$ be an irrational real number in $(0, 1)$ 
with partial quotients $a_1, a_2, \dots$ and convergents $p_k/q_k = [0; a_1, \ldots , a_k]$. 
The symbols ${\bf a}, {\bf b}$ are viewed as distinct elements of $\{0, 1, \ldots , b-1\}$. 

Let $(M_k)_{k \ge 1}$ be the sequence of finite words associated with $(a_i)_{i \ge 1}$, 
that is, defined by
\begin{equation}
M_0 = {\bf a},  \quad M_1 =  {\bf a}^{a_1 - 1} {\bf b}, \quad
M_{k} = (M_{k-1})^{a_{k}} M_{k-2}, \quad \hbox{for $k \ge 2$}. 
\label{M}
\end{equation}
Note that {\bf ab} is a suffix of $M_k$ for any odd integer $k \ge 3$, 
while {\bf ba} is a suffix of $M_k$ for any
even integer $k \ge 2$.
For $k \ge 2$, we denote by $M_k^{--}$ the word $M_k$ deprived of its two last letters and we set  $M^*_k= M_k^{--}{\bf ba}$ or  $M^*_k= M_k^{--}{\bf ab}$,  according whether $k$ is odd or even. Thus, $M_k^*$  is the word which differs from $M_k$ by its 
last two letters and only by these letters. 
For instance, $M_2 = ({\bf a}^{a_1 - 1} {\bf b})^{a_2} {\bf a}$   
and $M_2^* = ({\bf a}^{a_1 - 1} {\bf b})^{a_2 - 1} {\bf a}^{a_1} {\bf b}$.    

The commutation relation
\begin{equation}
M_{k-1}M_k^{--} = M_kM_{k-1}^{--},
\label{11}
\end{equation}
holds for any $k\ge3$, from which follows the formula
\begin{equation}
M_k = M_{k-1} M'_k = M'_k M^*_{k-1}, \label{10}
\end{equation}
where we have set
$$
M'_k = (M_{k-1})^{a_{k} -1} M_{k-2}.
$$    %%y ($$ instead of $) 

Furthermore, the word $\lim_{k \to \infty} M_k = c_1 c_2 \ldots $ 
is the characteristic Sturmian word of slope $\theta$
and we set 
\begin{equation}
\xi_0 = \sum_{n=1}^\infty {c_n \over b^n}.     \label{12}    
\end{equation}

Throughout this paper, $|W|$ denotes the length (number of letters) of the finite word $W$. 
Note that $|M_k| = q_k$.  
We begin with an easy lemma. 

\begin{lemma}
Let $k \ge 3$ be an integer and  $V$ a finite word over $\{ {\bf a}, {\bf b} \}$. 
Put  
$$
{\bfx} = V M_k M_k \cdots, \quad  {\bfy} = V M'_k  (M_k)^\infty.   
$$ 
Then, there exists a real number $\eta_k$ with $|\eta_k| \le 1$  
and
$$
\xi_{\bfx} - \xi_{\bfy} = 
{(-1)^k({\bf b} - {\bf a})(b-1) \over b^{|V|+q_k}} +
 {(-1)^k({\bf b} - {\bf a})(b-1) + \eta_k \over b^{|V|+2q_k}}.   
$$
In particular, if $V = (M_k)^a M_{k-1}$ for some nonnegative integer $a$, 
then 
$$
{\bfy}= (M_k)^a M_{k-1}M'_k(M_k)^\infty = (M_k)^\infty
$$ 
and 
$$
\xi_{\bfx} - \xi_{\bfy} = 
{(-1)^k({\bf b} - {\bf a})(b-1) \over b^{(a+1)q_k+q_{k-1}}} +
 {(-1)^k({\bf b} - {\bf a})(b-1) + \eta_k \over b^{(a+2)q_k+q_{k-1}}}.      
$$   %%d
\end{lemma}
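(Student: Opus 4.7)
The plan is to compute $\xi_{\mathbf{x}} - \xi_{\mathbf{y}}$ by a direct digit-by-digit comparison, organised around the observation that after the common prefix $V$ both tails are purely periodic of period $q_k$. Factoring out $b^{-|V|}$ reduces the problem to comparing $X := M_k M_k M_k \cdots$ with $Y := M'_k M_k M_k \cdots$. The decomposition $M_k = M_{k-1} M'_k$ shows that $Y$ is also purely periodic of period $q_k$, with one full period equal to $M'_k M_{k-1}$.

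The key step is the within-period comparison. Invoking the alternative decomposition $M_k = M'_k M^*_{k-1}$ from (10), the two length-$q_k$ blocks $M_k$ and $M'_k M_{k-1}$ share their first $q_k - 2$ letters and differ only in the last two, whose values are determined by the endings of $M^*_{k-1}$ versus $M_{k-1}$ (namely $\mathbf{ba}$ vs.\ $\mathbf{ab}$ when $k$ is even, and $\mathbf{ab}$ vs.\ $\mathbf{ba}$ when $k$ is odd). A short parity check then yields, for every $m \ge 1$,
\begin{equation*}
X[mq_k - 1] - Y[mq_k - 1] = (-1)^k (\mathbf{b} - \mathbf{a}), \qquad X[mq_k] - Y[mq_k] = -(-1)^k (\mathbf{b} - \mathbf{a}),
\end{equation*}
while $X[j] = Y[j]$ at every other position. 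Collecting these two contributions per period produces the closed form
\begin{equation*}
\xi_{\mathbf{x}} - \xi_{\mathbf{y}} = \sum_{m \ge 1} \frac{(-1)^k (\mathbf{b} - \mathbf{a})(b-1)}{b^{|V| + m q_k}} = \frac{(-1)^k (\mathbf{b} - \mathbf{a})(b-1)}{b^{|V|}(b^{q_k} - 1)}.
\end{equation*}

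To match the stated shape I would display the $m = 1$ and $m = 2$ terms explicitly and lump the tail $m \ge 3$ into a single remainder $\eta_k / b^{|V| + 2q_k}$; this gives $\eta_k = (-1)^k (\mathbf{b} - \mathbf{a})(b-1)/(b^{q_k} - 1)$, and the bound $|\eta_k| \le (b-1)^2/(b^{q_k} - 1) \le 1$ follows because the hypothesis $k \ge 3$ forces $q_k \ge 3$ and hence $(b-1)^2 + 1 \le b^{q_k}$ for every $b \ge 2$. The `in particular' conclusion is then a one-line substitution: plugging $V = (M_k)^a M_{k-1}$ and applying $M_{k-1} M'_k = M_k$ collapses $\mathbf{y}$ to $(M_k)^\infty$, and the stated exponents arise from $|V| = a q_k + q_{k-1}$.

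The main obstacle I anticipate is the sign bookkeeping in the middle step: correctly pairing the parity of $k$ with the endings of $M_{k-1}$ and $M^*_{k-1}$ so that the factor $(-1)^k$ emerges uniformly in both letter positions. Once that case analysis is settled, the remaining geometric-series estimate is routine.
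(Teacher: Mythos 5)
Your proof is correct and follows essentially the same route as the paper: both hinge on the two factorizations $M_k = M_{k-1}M'_k = M'_k M^*_{k-1}$ from \eqref{10} and the fact that $M_{k-1}$ and $M^*_{k-1}$ end in ${\bf ab}$/${\bf ba}$ swapped according to the parity of $k$, which locates the mismatched digits at positions $|V|+mq_k-1$, $|V|+mq_k$. The only difference is cosmetic: the paper compares just a two-period prefix and absorbs the tail into $\eta_k$ by the trivial bound $|\eta_k|\le 1$, whereas you sum the full geometric series and exhibit $\eta_k = (-1)^k({\bf b}-{\bf a})(b-1)/(b^{q_k}-1)$ explicitly, which is a valid (slightly sharper) way to conclude.
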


\begin{proof} 
By \eqref{10}, the words $$
V M'_{k} M^*_{k-1} M'_k M^*_{k-1} \quad
\hbox{and} \quad V M'_k M_{k-1} M'_k M_{k-1}
$$
are prefixes of ${\bfx}$ and ${\bfy}$, respectively.
Since ${\bf ab}$ and ${\bf ba}$ (respectively, ${\bf ba}$ and ${\bf ab}$) are suffixes of 
$M_{k-1}$ and $M^*_{k-1}$ for even $k$ (respectively, for odd $k$), this completes the proof.
\end{proof} 

Using Lemma 7.2  of \cite{BuKim19}, we have the following proposition.  
This is the key tool for the proof of Theorem 3 below. 

\begin{proposition}    
Let $k$ be an integer with $k \ge 3$  
and ${\bfx}$ a Sturmian word over $\{ {\bf a}, {\bf b} \}$ of slope $\theta = [0; a_1, a_2, \ldots ]$. 
Then, there exist a uniquely determined    
non-empty suffix $U_k$ of $M_k M_{k+1} = (M_k)^{a_{k+1}+1} M_{k-1} $
and an integer $d_{k+1}$  
 such that 
$$
d_{k+1} \in \{a_{k+1}, a_{k+1} + 1 \}  
$$
and 
$$
{\bfx} = U_k (M_k)^{d_{k+1}} M_{k-1} M_k M_{k+1}^{--} \dots  %%d
 = U_k (M_k)^{d_{k+1}} M_{k-1} M_k M_k \dots      
$$ 
In particular, we have $U_k =M_{k+1}$ when $\bfx=\bfc_\theta$ is the characteristic Sturmian word.
\end{proposition}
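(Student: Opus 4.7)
The plan is to derive Proposition 2 as a direct consequence of Lemma~7.2 of \cite{BuKim19}, combined with the standard hierarchical (Ostrowski-type) decomposition of Sturmian words at level $k$. Recall that every Sturmian word $\bfx$ of slope $\theta = [0; a_1, a_2, \ldots]$ admits, after a finite initial fragment, a unique parsing as an infinite concatenation of the two blocks $M_k$ and $M_{k-1}$; moreover, the resulting sequence of blocks (treated as two letters) is itself Sturmian, with slope governed by the shifted continued fraction $[0; a_{k+1}, a_{k+2}, \ldots]$. In particular, the maximal runs of consecutive $M_k$ blocks have length either $a_{k+1}$ or $a_{k+1}+1$, which will provide the dichotomy $d_{k+1} \in \{a_{k+1}, a_{k+1}+1\}$.

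First I would apply Lemma~7.2 of \cite{BuKim19} to locate, inside $\bfx$, the first occurrence of the block $M_{k-1}$ in this level-$k$ parsing, and define $U_k$ as the prefix of $\bfx$ ending just before the maximal run of $M_k$'s that precedes this first $M_{k-1}$. This gives $\bfx = U_k (M_k)^{d_{k+1}} M_{k-1} \bfy$ for some Sturmian tail $\bfy$. The assertion that $U_k$ is a (non-empty) suffix of $M_k M_{k+1}$ amounts to the bound $|U_k| \le q_k + q_{k+1}$: otherwise the beginning of $\bfx$ would contain either an initial run of more than $a_{k+1}+1$ blocks $M_k$ or an unaccounted $M_{k-1}$ block, contradicting the Sturmian pattern at level $k$. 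Uniqueness of the pair $(U_k, d_{k+1})$ then follows from the uniqueness of the level-$k$ block decomposition once a starting alignment is fixed.

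For the continuation after $M_{k-1}$, the block that immediately follows $M_{k-1}$ in a Sturmian parsing is always $M_k$; invoking the factorization $M_{k+1} = (M_k)^{a_{k+1}} M_{k-1}$ together with the commutation relation \eqref{11}, one identifies the next $q_k + q_{k+1}-2$ letters as exactly $M_k M_{k+1}^{--}\cdots$, yielding the first displayed form. Since $M_{k+1}^{--}$ itself starts with $M_k$ (indeed, $M_{k+1}^{--} = M_k^{a_{k+1}} M_{k-1}^{--}$), the weaker form $M_{k-1} M_k M_k \cdots$ follows immediately. Finally, when $\bfx = \bfc_\theta = \lim_{j\to\infty} M_j$, the word $M_{k+1}$ is a prefix of $\bfx$, and is itself a suffix of $M_k M_{k+1}$; uniqueness of $U_k$ combined with the fact that $\bfc_\theta$ starts cleanly with the block $M_{k+1}$ in the level-$k$ parsing forces $U_k = M_{k+1}$.

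The main obstacle will be the careful translation of Lemma~7.2 of \cite{BuKim19} into the precise statement at hand: the dichotomy $d_{k+1} \in \{a_{k+1}, a_{k+1}+1\}$ and the bound $|U_k| \le q_k + q_{k+1}$ both require tracking how the starting phase of $\bfx$ within the $M_{k+1}$-cylinder determines whether the opening run of $M_k$'s is of length $a_{k+1}$ or $a_{k+1}+1$, which in turn determines whether $U_k$ sits inside the $M_{k+1}$-part of $M_k M_{k+1}$ or straddles the boundary.
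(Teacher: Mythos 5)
Your overall plan (reduce everything to Lemma 7.2 of \cite{BuKim19} together with the level-$k$ block structure of Sturmian words) is the same as the paper's, but the construction you give for $U_k$ breaks down at exactly the point you postpone as ``the main obstacle''. You take $U_k$ to be the prefix of $\bfx$ ending just before the maximal run of $M_k$'s that precedes the \emph{first} $M_{k-1}$-block of the parsing. This only matches the Proposition when $\bfx$ begins inside (a suffix of) $M_{k-1}$. Whenever $\bfx$ begins inside an $M_k$-block of a run, your rule returns for $U_k$ a non-empty suffix $W$ of $M_k$ and for $d_{k+1}$ the length of the truncated initial run, which can be any value between $0$ and $a_{k+1}+1$; moreover $W$ is \emph{never} a suffix of $M_kM_{k+1}$, because every non-empty suffix of $M_kM_{k+1}$ ends with the last letter of $M_{k+1}$, which differs from the last letter of $M_k$ (the final two letters of $M_j$ alternate between ${\bf ab}$ and ${\bf ba}$ with the parity of $j$). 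Your claim that the suffix condition ``amounts to the bound $|U_k|\le q_k+q_{k+1}$'' is therefore false: the condition constrains the alignment, not just the length. The correct rule is to absorb everything up to and including the first complete $M_{k-1}$-block into $U_k$; in the delicate case $\bfx=WM_{k+1}M_kM_{k+1}^{--}\dots$ with $W$ a suffix of $M_k$ (case (iii) of Lemma 7.2) the paper sets $U_k=WM_{k+1}$, and then still has to determine $d_{k+1}$: this uses the commutation relation \eqref{11} in the form $M_{k+1}M_kM_{k+1}^{--}=M_{k+1}M_{k+1}M_k^{--}$ and a second appeal to Lemma 7.2 applied to the tail $\bfx'=M_{k+1}M_kM_{k+1}^{--}\dots$ to decide between $a_{k+1}$ and $a_{k+1}+1$. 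Nothing in your outline supplies this step.

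Two further assertions need proofs rather than appeals to plausibility. Uniqueness of the pair $(U_k,d_{k+1})$ does not follow from ``uniqueness of the level-$k$ block decomposition once a starting alignment is fixed'': the alignment is precisely what must be pinned down, and because of the heavy self-overlap of $M_k$ and $M_{k-1}$ recorded in \eqref{11} one must exclude that two different admissible pairs generate the same one-sided word; the paper gets this from the uniqueness statement in Lemma 7.2, namely that the $2q_{k+1}+q_k$ possible cases have pairwise distinct prefixes of length $2q_{k+1}+q_k-1$. Likewise, for $\bfx=\bfc_\theta$ you cannot deduce $U_k=M_{k+1}$ from uniqueness alone; you must exhibit a valid decomposition with $U_k=M_{k+1}$, which the paper does from the recurrence \eqref{M}, obtaining $d_{k+1}=a_{k+1}$ when $a_{k+2}\ge 2$ and $d_{k+1}=a_{k+1}+1$ when $a_{k+2}=1$. (On the positive side, your verification of the continuation $M_{k-1}M_kM_{k+1}^{--}\dots$ via \eqref{11}, and the passage from $M_{k+1}^{--}$ to $M_kM_k$, are sound.)
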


\begin{proof}    
 By \cite[Lemma 7.2]{BuKim19}, there exists a unique word $W$ satisfying   
\begin{enumerate}
\item[(i)] $\bfx = W M_{k+1} M_{k} M_{k+1}^{--} \dots$, where $W$ is a non-empty suffix of $M_{k+1}$,   or 
\item[(ii)] $\bfx = W M_{k} M_{k+1} M_{k} M_{k+1}^{--} \dots$, where $W$ is a non-empty suffix of $M_{k+1}$,   or
\item[(iii)] $\bfx = W M_{k+1} M_{k} M_{k+1}^{--} \dots$, where $W$ is a non-empty suffix of $M_{k}$.
\end{enumerate}
This corresponds exactly to $2q_{k+1} +q_{k}$ cases. 
Moreover, it was also shown in \cite[Lemma 7.2]{BuKim19} that the prefixes  
of length $2q_{k+1} +q_{k}-1$ corresponding to these $2q_{k+1} +q_{k}$ cases are all distinct.

For (i) and (ii), we put $U_k = W$. 
Then $U_k$ is a suffix of $M_k M_{k+1}$ and  
$$\bfx = U_k (M_k)^{a_{k+1}} M_{k-1} M_{k} M_{k+1}^{--}  %%d
  \dots \quad \text{ for  (i)}  $$
and
$$\bfx = U_k (M_k)^{a_{k+1}+1} M_{k-1} M_{k} M_{k+1}^{--}  %%d
   \dots \quad \text{ for (ii)}.$$
Note that the assumption $k \ge 3$ implies that $M_k$ is a prefix of $M_{k+1}^{--}$.

For (iii), we write  $\bfx = W M_{k+1} M_{k} M_{k+1}^{--} \dots$ 
where $W$ is a non-empty suffix of $M_{k}$.
Let $\bfx' =  M_{k+1} M_{k} M_{k+1}^{--} \dots  $ be such that $\bfx = W \bfx'$.  
Since $M_{k+1} M_{k} M_{k+1}^{--} = M_{k+1} M_{k+1} M_{k}^{--}$ by \eqref{11}, the 
Sturmian word $\bfx'$ satisfies (i) or (ii) with $W=M_{k+1}$.
Therefore, we have   
$$\bfx' = M_{k+1} M_{k+1} M_{k} M_{k+1}^{--} \dots$$   or 
$$\bfx' = M_{k+1} M_{k} M_{k+1} M_{k} M_{k+1}^{--} \dots,$$
depending on the $(2q_{k+1} +q_{k}-1)$-th letter of $\bfx'$. 

We choose $U_k = W M_{k+1}$. Then $U_k$ is a suffix of $M_k M_{k+1}$ and 
$$
\bfx = W \bfx' = W M_{k+1} M_{k+1} M_{k} M_{k+1}^{--} \dots = U_{k} (M_k)^{a_{k+1}} M_{k-1}  M_{k} M_{k+1}^{--}  %%d
 \dots
$$  
or 
$$
\bfx = W \bfx' = W M_{k+1} M_{k} M_{k+1} M_{k} M_{k+1}^{--} \dots = U_{k} (M_k)^{a_{k+1}+1} M_{k-1} M_{k} M_{k+1}^{--}  %%d
 \dots.
$$

For the characteristic Sturmian word $\bfc_\theta$, the recurrence relations \eqref{M} show that %%y 
$$
U_{k}    %%d
= M_{k+1} = M_k^{a_{k+1}}M_{k-1}
$$ 
and that 
$$
d_{k+1} = 
\begin{cases}
a_{k+1} \quad &{\rm if} \quad a_{k+2} \ge 2
\\
a_{k+1}+1 \quad &{\rm if} \quad a_{k+2}=1.
\end{cases}
$$
  This concludes the proof of the proposition. 
\end{proof}

Throughout the rest of this section, we let ${\bfx}$ be a Sturmian word and keep the notation of 
Proposition 1.

\begin{lemma}   
Set $c = ({\bf b} - {\bf a})(b-1)$.  
%If ${\bfx} = U_k (M_k)^d M_{k-1} M_k M_k \dots $ for an integer $d$
%and a suffix $U_k$ of $M_k M_{k+1}= (M_k)^{a_{k+1}+1} M_{k-1}$,   
For a given $k\ge 3$,  
%there exist an integer $m_k$  and a real number $\delta_k$ with $ |\delta_k | < 1$ such that 
put ${\bfy} = U_k (M_k)^\infty$ and $\bfy' = U_k M'_k (M_k)^\infty$.
Then there exist $\delta_k$ and $\delta'_k$ in $(-1, 1)$ satisfying 
$$
\xi_{\bfx} - \xi_{\bfy} = 
{ (-1)^k c + \delta_k b^{-q_k+2} \over b^{|U_k|+(d_{k+1} %%d
 +1)q_k+q_{k-1}}}, \qquad
\xi_{\bfx} - \xi_{\bfy'}  = 
{ (-1)^k c + \delta'_k b^{-q_k+2}  \over b^{|U_k|+q_k}} .
$$
\end{lemma}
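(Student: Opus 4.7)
The plan is to reduce each identity to an application of Lemma 1 after identifying the appropriate common prefix. The structural input throughout is Proposition 1, giving
$$
\bfx = U_k (M_k)^{d_{k+1}} M_{k-1} M_k M_k \cdots,
$$
together with $M_k = M_{k-1} M'_k$ from \eqref{10}, which lets $(M_k)^\infty$ be reparsed starting with an $M_{k-1}$ block.

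For $\xi_{\bfx} - \xi_{\bfy}$, reparsing $\bfy = U_k(M_k)^\infty$ as $U_k(M_k)^{d_{k+1}} M_{k-1}\cdot M'_k(M_k)^\infty$ reveals that $\bfx$ and $\bfy$ agree on the prefix $V:=U_k(M_k)^{d_{k+1}} M_{k-1}$ and then continue as $V\cdot M_k M_k\cdots$ and $V\cdot M'_k(M_k)^\infty$, respectively. Lemma 1 then gives
$$
\xi_{\bfx} - \xi_{\bfy} = \frac{(-1)^k c}{b^{|V|+q_k}} + \frac{(-1)^k c + \eta_k}{b^{|V|+2q_k}}, \qquad |\eta_k|\le 1.
$$
Since $|V|+q_k = |U_k|+(d_{k+1}+1)q_k + q_{k-1}$, combining over this common denominator yields the stated form with $\delta_k := ((-1)^k c + \eta_k)/b^2$, and $|\delta_k|\le ((b-1)^2+1)/b^2 < 1$ for every $b\ge 2$.

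For $\xi_{\bfx} - \xi_{\bfy'}$, the pair $(\bfy,\bfy')$ itself is in the configuration of Lemma 1 with $V=U_k$, since $\bfy = U_k\cdot M_k M_k\cdots$ and $\bfy' = U_k\cdot M'_k(M_k)^\infty$. The same computation yields
$$
\xi_{\bfy} - \xi_{\bfy'} = \frac{(-1)^k c + \tilde\delta_k b^{-q_k+2}}{b^{|U_k|+q_k}}, \qquad |\tilde\delta_k|\le \frac{(b-1)^2+1}{b^2}.
$$
I then split $\xi_{\bfx}-\xi_{\bfy'} = (\xi_{\bfx}-\xi_{\bfy}) + (\xi_{\bfy}-\xi_{\bfy'})$ and set $\delta'_k := \tilde\delta_k + (\xi_{\bfx} - \xi_{\bfy})\,b^{|U_k|+2q_k-2}$. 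The first identity bounds $|\xi_{\bfx}-\xi_{\bfy}| \le ((b-1)^2+1)\,b^{-|U_k|-(d_{k+1}+1)q_k-q_{k-1}}$; since $d_{k+1}\ge 1$ and $k\ge 3$ forces $q_{k-1}\ge q_2\ge 2$, the correction term has absolute value at most $((b-1)^2+1)/b^{q_{k-1}+2}\le ((b-1)^2+1)/b^4$.

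The main obstacle I anticipate is verifying $|\delta'_k|<1$ strictly after combining the two contributions; this reduces to the elementary estimate $((b-1)^2+1)/b^2 + ((b-1)^2+1)/b^4 < 1$ for all $b\ge 2$, which in the extreme case $b=2$ reads $\tfrac{1}{2}+\tfrac{1}{8} = \tfrac{5}{8}$, and becomes easier as $b$ grows.
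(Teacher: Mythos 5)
Your treatment of the first identity is exactly the paper's: take $V=U_k(M_k)^{d_{k+1}}M_{k-1}$, use $M_k=M_{k-1}M'_k$ to rewrite $\bfy=V M'_k(M_k)^\infty$, and apply Lemma 2.1, absorbing the second term into $\delta_k=((-1)^kc+\eta_k)/b^2$ with $|c|\le (b-1)^2$. For the second identity you diverge from the paper: the paper observes that $\bfx$ itself begins with $U_kM_kM_k$ (clear when $d_{k+1}\ge 2$, and via the commutation relation \eqref{11}, $M_{k-1}M_k=M_kM^*_{k-1}$, when $d_{k+1}=1$) and applies Lemma 2.1 directly to the pair $(\bfx,\bfy')$ with $V=U_k$; you instead apply Lemma 2.1 to $(\bfy,\bfy')$, which is trivially in the right configuration, and then write $\xi_{\bfx}-\xi_{\bfy'}=(\xi_{\bfx}-\xi_{\bfy})+(\xi_{\bfy}-\xi_{\bfy'})$, folding the already-estimated first difference into $\delta'_k$. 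Both routes are valid: the paper's buys a one-line conclusion at the price of the combinatorial observation involving \eqref{11}, while yours avoids that observation but requires the extra bookkeeping showing $|\delta'_k|<1$, which your size estimates (using $d_{k+1}\ge 1$ and $q_{k-1}\ge 2$) do correctly deliver.

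One small slip, not affecting correctness: the final inequality $((b-1)^2+1)/b^2+((b-1)^2+1)/b^4<1$ does hold for every $b\ge 2$, but it does not ``become easier as $b$ grows''---the left-hand side increases towards $1$ as $b\to\infty$, so $b=2$ is in fact the most comfortable case, not the extreme one. The clean justification is that the inequality is equivalent to $2b^3-3b^2+2b-2>0$, which is immediate for $b\ge 2$.
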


\begin{proof}   
%Suppose that $q_{k+1} >  (1+ \eps)(|U_k| + q_k)$.
%Put ${\bfy} = U_k (M_k)^\infty$. Then, 
%$ \xi_{\bfy} =  {m_k \over b^{r_k} ( b^{q_k} -1)}$ for some integer $m_k$.
Setting $V = U_k (M_k)^{d_{k+1}}  %%d
 M_{k-1}$,  
it follows from Lemma 2.1 that  
 there exist $\eta_k$ with $|\eta_k| \le 1$ 
  and $\delta_k$ in $(-1, 1)$ such that 
$$
\begin{aligned}
\xi_{\bfx} - \xi_{\bfy} & = 
{(-1)^k({\bf b} - {\bf a})(b-1) \over b^{|U_k|+(d_{k+1}+1)q_k+q_{k-1}}} +
 {(-1)^k({\bf b} - {\bf a})(b-1) + \eta_k \over b^{|U_k|+(d_{k+1}+2)q_k+q_{k-1}}} 
 \\
& = { (-1)^k c + \delta_k b^{-q_k+2} \over b^{|U_k|+(d_{k+1}+1)q_k+q_{k-1}}}.   
\end{aligned}  %%d
$$

%Suppose now that $q_{k+1} \le  (1+ \eps)(|U_k| + q_k)$ and 
Put $V = U_k$.     %%y  
%Observe that our assumption implies that $M_{k-1}$ is a suffix of $V$. 
%Recall that $M_{k-1} M'_k = M_k$.  
Then, 
$$
 {\bfx} = U_k (M_k)^{d_{k+1}}  %%d
  M_{k-1} M_k M_k \dots = V  M_k M_k \dots
 $$
since $d_{k+1} %%d
 \ge 1$ and $M_{k-1}M_k = M_kM_{k-1}^*$ by \eqref{11}.
It follows from Lemma 2.1 that 
there exist $\eta'_k$  with $|\eta'_k| \le 1$ 
and $\delta'_k$ in $(-1, 1)$ such that 
$$
\begin{aligned}
\xi_{\bfx} - \xi_{\bfy'}  & = 
{(-1)^k({\bf b} - {\bf a})(b-1) \over b^{|U_k|+q_k}} +
 {(-1)^k({\bf b} - {\bf a})(b-1) + \eta'_k \over b^{|U_k|+2q_k}}
   \\
& = { (-1)^k c + \delta'_k b^{-q_k+2} \over b^{|U_k|+q_k}}.   
\end{aligned}
$$

\end{proof} 

We deduce from \eqref{10} that the infinite word $\bfy' = U_k M'_k (M_k)^\infty$ 
is purely periodic with period length $q_k$.    %%y 
Note that  for some integers $m_k$, $m'_k$,    we have  
$$ \xi_{\bfy} =  {m_k \over b^{|U_k|} ( b^{q_k} -1)} \ \mbox{ and }\ 
 \xi_{\bfy'} =  {m'_k \over  b^{q_k} -1}.$$

Set $\eps = 1/10$  
and define 
$$
r_k := 
\begin{cases}
 |U_{k}|, \quad &{\rm if } \quad q_{k+1} >   (1+ \eps)(|U_k| + q_k),
 \\ 
 0, \quad &{\rm if } \quad q_{k+1} \le  (1+ \eps)(|U_k| + q_k).
 \end{cases}
 $$
and
$$
t_k := 
\begin{cases}
d_{k+1} %%d
 q_k + q_{k-1}, \quad &{\rm if } \quad q_{k+1} >   (1+ \eps)(|U_k| + q_k),
 \\ 
|U_k| , \quad &{\rm if } \quad q_{k+1} \le  (1+ \eps)(|U_k| + q_k).
 \end{cases}
 $$

Then for $k\ge 3$,   
there exist an integer $m_k$  
and a real number $\delta_k$ with $ |\delta_k | < 1$ such that 
\begin{equation}
\xi_{\bfx} - {m_k \over b^{r_k} ( b^{q_k} -1)} 
= { (-1)^k c + \delta_k b^{-q_k+2} \over b^{r_k + q_k + t_k }}. 
\label{30}
\end{equation} 
In particular, for the real number $\xi_0$ defined  in \eqref{12} from the characteristic Sturmian $\bfc_\theta$,
 we have $U_k = M_{k+1} = (M_k)^{a_{k+1}}M_{k-1}$ and 
there exist an integer $m^0_k$ 
and a real number $\delta^0_k$ with $ |\delta^0_k | < 1$ such that 
\begin{equation}
\xi_{0} - {m^0_k \over b^{q_k} -1} 
= { (-1)^k c + \delta^0_k b^{-q_k+2} \over b^{q_k+ q_{k+1}}}. 
\label{31}
\end{equation}

\begin{lemma}
Suppose that there are infinitely many indices $k$ with $a_k \ge 2$. 
Then, there are infinitely many indices $k$ such that
$$
q_{k+1}  > (1+\eps) (2+\eps) q_k.  
$$
\end{lemma}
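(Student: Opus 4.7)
The plan is to use the standard recurrence $q_{k+1}=a_{k+1}q_k+q_{k-1}$, equivalently $q_{k+1}/q_k = a_{k+1}+q_{k-1}/q_k$, and split into two cases based on how large the partial quotients can be. The numerical constant to beat is $(1+\varepsilon)(2+\varepsilon)=\tfrac{11}{10}\cdot\tfrac{21}{10}=\tfrac{231}{100}=2.31$, so the target is $q_{k+1}/q_k>2.31$.

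\textbf{Case 1.} Suppose there are infinitely many indices $k$ with $a_{k+1}\ge 3$. Then $q_{k+1}/q_k\ge a_{k+1}\ge 3>2.31$ for each such $k$, and we are done immediately.

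\textbf{Case 2.} Otherwise, there exists $N$ so that $a_j\le 2$ for every $j\ge N$. Combined with the hypothesis that infinitely many partial quotients are $\ge 2$, this forces infinitely many $j\ge N+1$ with $a_j=2$. For each such $j$ set $k=j-1$, so $a_{k+1}=2$ and $a_k\le 2$. For $k$ large enough we have $q_{k-2}<q_{k-1}$, so
\[
\frac{q_k}{q_{k-1}} = a_k+\frac{q_{k-2}}{q_{k-1}} < 2+1 = 3,
\]
hence $q_{k-1}/q_k>1/3$ and
\[
\frac{q_{k+1}}{q_k} = 2 + \frac{q_{k-1}}{q_k} > 2+\tfrac{1}{3} = \tfrac{7}{3} > 2.31.
\]
This supplies infinitely many indices $k$ with the required inequality.

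There is no real obstacle here; the argument is a short case split on whether $\{a_k\}$ has infinitely many terms $\ge 3$, followed by the elementary bound $[a_k;a_{k-1},\ldots]<3$ whenever $a_k\le 2$, which is exactly enough to push past the chosen threshold $2.31$. The mild subtlety is merely to verify that the numerical slack between $7/3$ and $(1+\varepsilon)(2+\varepsilon)$ is positive, which is what pins down the admissible value $\varepsilon=1/10$.
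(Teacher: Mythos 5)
Your proposal is correct and follows essentially the same route as the paper: a case split on whether infinitely many partial quotients are at least $3$ (giving $q_{k+1}\ge 3q_k$ directly), and otherwise using $a_k\le 2$ eventually to get $q_{k-1}>q_k/3$, hence $q_{k+1}=2q_k+q_{k-1}>\tfrac{7}{3}q_k>(1+\eps)(2+\eps)q_k$ at the infinitely many indices with $a_{k+1}=2$. No issues.
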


\begin{proof} 
If there are infinitely many $k$ with $a_k \ge 3$, then the lemma   holds true since 
$(1+\eps) (2+\eps) < 3$. Otherwise, there exists $k_0$ such that  
$a_k \le 2$ for $k \ge k_0$. Then, $q_{k-1} > q_k /3$ for $k > k_0$. 
Consequently, if $a_{k+1} = 2$ for some $k > k_0 + 1$, then
$$
q_{k+1} = 2q_k + q_{k-1} > {7 \over 3} q_k 
 > (1+\eps) (2+\eps) q_k. 
$$
This completes the proof of the lemma. 
\end{proof} 

\begin{lemma}
If $a_k = 1$ for all large integers $k$, then
there are infinitely many indices $k$ such that $|U_k| > (1 + \eps )q_k.$
\end{lemma}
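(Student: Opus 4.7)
My plan is to argue by contradiction. Suppose there exists an index $K$ such that $a_k = 1$ and $|U_k| \leq (1+\eps) q_k$ for all $k \geq K$. In the Fibonacci regime the ratio $q_{k+1}/q_k$ tends to the golden mean $\varphi = (1+\sqrt 5)/2 > 1+\eps = 11/10$, so for $k$ sufficiently large one has $|U_k| < q_{k+1}$. Since $U_k$ is a non-empty suffix of $M_k M_{k+1}$ of length at most $q_{k+1} = |M_{k+1}|$, it coincides with the suffix of $M_{k+1}$ of length $|U_k|$.

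The central step is to prove the recursion $U_{k+1} = U_k M_k$, so that $|U_{k+1}| = |U_k| + q_k$. On one hand, $U_k M_k$ is a prefix of $\bfx$: by Proposition 1 one has $\bfx = U_k (M_k)^{d_{k+1}} M_{k-1} M_k M_{k+1}^{--} \cdots$ with $d_{k+1} \geq a_{k+1} = 1$, so the $q_k$ letters of $\bfx$ immediately after $U_k$ form the block $M_k$. On the other hand, $U_k M_k$ is a suffix of $M_{k+1} M_{k+2} = M_{k+1} M_{k+1} M_k$ (using the Fibonacci identity $M_{k+2} = M_{k+1} M_k$): the last $q_k$ letters trivially give $M_k$, and the preceding $|U_k|$ letters form the suffix of the second $M_{k+1}$ of length $|U_k|$, which agrees with $U_k$ since both $U_k$ and this preceding block are the suffix of $M_{k+1}$ of length $|U_k|$. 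Because $|U_k M_k| = |U_k| + q_k \leq q_{k+2}$, the word $U_k M_k$ is a valid candidate for $U_{k+1}$, and the uniqueness clause of Proposition 1 forces $U_{k+1} = U_k M_k$.

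Iterating $|U_{k+1}| - |U_k| = q_k$ from $k = K$ and using the Fibonacci partial-sum identity $\sum_{j=K}^{K+n-1} q_j = q_{K+n+1} - q_{K+1}$ gives $|U_{K+n}| = |U_K| + q_{K+n+1} - q_{K+1}$. Dividing by $q_{K+n}$ and letting $n \to \infty$ one obtains $|U_{K+n}|/q_{K+n} \to \varphi > 1+\eps$, contradicting the assumed upper bound. The only delicate point is the suffix verification above: one must invoke $|U_k| \leq q_{k+1}$ to identify the two suffixes of $M_{k+1}$ of length $|U_k|$. This is where the Fibonacci hypothesis is used essentially, since for $a_{k+1} \geq 2$ the analogous recursion $U_{k+1} = U_k M_k$ need not hold.
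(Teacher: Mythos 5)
Your overall strategy (derive the recursion $U_{k+1}=U_kM_k$ under the negation of the conclusion, then let the golden-ratio growth of $|U_{K+n}|/q_{K+n}$ give a contradiction) is the same as the paper's, and your suffix identification of $U_k$ inside $M_{k+1}$ and the limit computation are fine. The gap is in the central step. The uniqueness in Proposition 1 concerns the full factorization: there is exactly one pair $(U_{k+1},d_{k+2})$ with $U_{k+1}$ a non-empty suffix of $M_{k+1}M_{k+2}$ and $d_{k+2}\in\{a_{k+2},a_{k+2}+1\}$ such that ${\bfx}=U_{k+1}(M_{k+1})^{d_{k+2}}M_kM_{k+1}M_{k+2}^{--}\cdots$. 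It does \emph{not} say that any prefix of ${\bfx}$ which happens to be a suffix of $M_{k+1}M_{k+2}$ must equal $U_{k+1}$; to invoke uniqueness for $U_kM_k$ you would have to check that the continuation of ${\bfx}$ after the prefix $U_kM_k$ has the prescribed shape $(M_{k+1})^{d}M_kM_{k+1}M_{k+2}^{--}\cdots$, and you never do. Worse, this continuation condition genuinely fails when the level-$k$ exponent is $d_{k+1}=1$ (a value permitted by Proposition 1, since $d_{k+1}\in\{a_{k+1},a_{k+1}+1\}=\{1,2\}$ here, and nothing in your argument excludes it): then the word following $U_kM_k$ begins with $M_{k-1}M_k$, which by the commutation relation $M_{k-1}M_k^{--}=M_kM_{k-1}^{--}$ differs from $M_{k+1}=M_kM_{k-1}$ precisely in its last two letters, so $U_{k+1}\ne U_kM_k$; the correct value is then $U_{k+1}=U_kM_{k+1}M_k$, of length exceeding $(1+\eps)q_{k+1}$. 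So the recursion you assert is false as an unconditional statement, and your proof never brings the standing hypothesis $|U_{k+1}|\le(1+\eps)q_{k+1}$ to bear to rule that long alternative out.

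The paper closes exactly this point by a case analysis: $U_{k+1}$, being a non-empty suffix of $M_{k+1}M_{k+2}=M_{k+1}^{2}M_k$, is of the form $VM_{k+1}M_k$ or $VM_k$ with $V$ a non-empty suffix of $M_{k+1}$, or a non-empty suffix $W$ of $M_k$; comparing the level-$k$ and level-$(k+1)$ factorizations of ${\bfx}$ identifies $V=U_k$ and shows the third form would force $U_k=WM_{k+1}$, contradicting that $U_k$ is a suffix of $M_{k+1}$. Hence either $U_{k+1}=U_kM_{k+1}M_k$, in which case $|U_{k+1}|>(1+\eps)q_{k+1}$ and the lemma's conclusion (or a contradiction with your hypothesis at level $k+1$) is immediate, or $U_{k+1}=U_kM_k$, after which your telescoping argument applies. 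To repair your proof you need this dichotomy (or an explicit determination that $d_{k+1}=2$ under your hypothesis) before the golden-ratio computation can be run; the prefix-plus-suffix observation alone does not pin down $U_{k+1}$.
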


\begin{proof} 
Our assumption implies that $q_{k+1} > (1 + \eps) q_k$ for every large integer $k$. 
Suppose that $|U_k| \le (1 + \eps )q_k$ for some large integer $k$. 
Then, $U_k$ is a suffix of $M_{k+1}$.   

On the other hand, $U_{k+1}$ is a non-empty suffix of $ M_{k+1} M_{k+2}= M_{k+1}^2M_k$. Thus,  $U_{k+1}$ is either of the form $V M_{k+1} M_k$ or $V M_k$ for some  non-empty suffix $V$ of $M_{k+1}=M_kM_{k-1}$, 
or $U_{k+1}$ is equal to a non-empty suffix $W$ of $M_k$.   
For the first case, Proposition 1 at levels $k$ and $k+1$ reads
$$ 
 U_k (M_k)^{d_{k+1}} M_{k-1}\dots
= \bfx =V M_{k+1} M_k (M_{k+1})^{d_{k+2}} \dots = V M_k M_{k-1} \cdots 
$$ 
which implies that $V= U_k$. 
For the second case, we have  
$$ 
 U_k (M_k)^{d_{k+1}} M_{k-1}\dots = \bfx =V M_k (M_{k+1})^{d_{k+2}} \dots = V M_k M_k M_{k-1}  \cdots 
 $$ 
 which implies again that $V= U_k $.
For the third  case, Proposition 1 at level $k+1$, gives us the expression
$$
 \bfx =W M_{k+1}  M_{k+1} M_k M_{k+1} M_{k+1} \dots = W M_{k+1}  M_k M_{k-1} M_k M_k \cdots, 
 $$
 when $d_{k+2} = 2$, or 
$$ 
\begin{aligned}
\bfx  =W M_{k+1} M_k M_{k+1} M_{k+2}^{--}  %%d
  \dots &= W M_{k+1}  M_k M_k M_{k-1} M_{k} M_{k-1} M_k^{--}  \cdots \\  %%d
  &=  W M_{k+1}  M_k M_k M_{k-1} M_{k} M_{k} M_{k-1}^{--}  \cdots ,  %%d
\end{aligned}
$$
when $d_{k+2}= 1$. Both expressions  imply that $U_k = WM_{k+1}$, which is impossible since $U_k$ is assumed to be a suffix of $M_{k+1}$. 

Hence, we conclude that either $U_{k+1} = U_k M_k$ or $U_{k+1} = U_k M_{k+1} M_k$. 
If $U_{k+1} = U_k M_{k+1} M_k$, then $|U_{k+1}| > (1 + \eps )q_{k+1}$ and the lemma is established.
Suppose that $U_{k+\ell} = U_{k +\ell-1} M_{k+\ell-1}$ for $\ell \ge 1$. 
Since 
$$ 
\lim_{\ell \to \infty}{ |U_{k+\ell}| \over q_{k+\ell}} 
= \lim_{\ell \to \infty} {|U_{k} | + q_{k} + \dots + q_{k + \ell -1} \over q_{k+\ell} } = {1 + \sqrt 5 \over 2},
$$  
we have $|U_{k+\ell}| > (1 + \eps )q_{k+\ell}$ for some integer $\ell$. 
This concludes the proof of the lemma. %%y 
\end{proof} 

%From now on, ${\bfx}$ is a Sturmian word of slope $\theta$ over $\{ {\bf a}, {\bf b} \}$. 

\begin{lemma}
For any Sturmian number $\xi_{\bfx}$, 
the set ${\mathcal K}$ of integers $k$ satisfying 
$$
t_k > q_k + \eps (r_k + q_k), \qquad q_{k+1} >  (1+ \eps) (r_k + q_k),    
$$
is infinite. Moreover, for any $k$ in ${\mathcal K}$, there exist 
integers $m^0_k, m_k$ 
%and real numbers $\delta^0_k$, $\delta_k$ in $(-1, 1)$  
 such that 
$$
\Bigl| \xi_{0} - {m^0_k \over b^{q_k} -1}  \Bigr|
\le { 1 \over b^{r_k + 2 q_k + \eps (r_k + q_k) - 2}}, \quad
\Bigl| \xi_{\bfx} - {m_k \over b^{r_k} ( b^{q_k} -1)} \Bigr|
\le { 1 \over b^{r_k + 2 q_k + \eps (r_k + q_k) - 2}}. 
$$
\end{lemma}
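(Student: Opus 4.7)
The plan is to establish the statement in two parts: first the infinitude of $\mathcal{K}$, then the two Diophantine inequalities. The second part will fall out almost mechanically from \eqref{30} and \eqref{31}; the first requires a short case analysis.

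For infinitude I split according to the two alternatives covered by Lemmas 2.3 and 2.4. Suppose first that $a_k\ge 2$ for infinitely many $k$. By Lemma 2.3, infinitely many indices $k$ satisfy $q_{k+1}>(1+\eps)(2+\eps)q_k$. I further distinguish, for each such $k$, the two regimes in the definition of $r_k$ and $t_k$. If $q_{k+1}>(1+\eps)(|U_k|+q_k)$, so that $r_k=|U_k|$ and $t_k=d_{k+1}q_k+q_{k-1}$, then the second $\mathcal K$-inequality is exactly the regime hypothesis, while the first follows from $t_k\ge a_{k+1}q_k+q_{k-1}=q_{k+1}>q_k+\eps(r_k+q_k)$. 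Otherwise $r_k=0$, $t_k=|U_k|$, and combining $q_{k+1}\le(1+\eps)(|U_k|+q_k)$ with $q_{k+1}>(1+\eps)(2+\eps)q_k$ forces $|U_k|>(1+\eps)q_k$, which delivers both $\mathcal K$-inequalities simultaneously. Now suppose $a_k=1$ for all sufficiently large $k$. Then $q_{k+1}/q_k\to(1+\sqrt 5)/2>1+\eps$, so $q_{k+1}>(1+\eps)q_k$ for $k$ large. By Lemma 2.4 there are still infinitely many $k$ with $|U_k|>(1+\eps)q_k$, and the same dichotomy as above (depending on which regime $r_k$ and $t_k$ lie in) again places $k$ in $\mathcal{K}$.

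For the two estimates I substitute directly into \eqref{30} and \eqref{31}. For any $k\in\mathcal{K}$ we have $k\ge 3$, whence $q_k\ge 3$ and $b^{-q_k+2}\le 1$; combined with $|c|\le(b-1)^2$, this gives
$$
|(-1)^k c + \delta_k b^{-q_k+2}| \le (b-1)^2+1 \le b^2,
$$
and similarly with $\delta_k$ replaced by $\delta^0_k$. The two defining inequalities of $\mathcal K$ then yield
$$
r_k+q_k+t_k > r_k+2q_k+\eps(r_k+q_k) \quad\text{and}\quad q_k+q_{k+1} > r_k+2q_k+\eps(r_k+q_k),
$$
so absorbing the $b^2$ numerator produces the required upper bound $b^{-(r_k+2q_k+\eps(r_k+q_k)-2)}$ in both cases.

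The main obstacle is the bookkeeping in the first part: verifying that under the cross-product of the two independent dichotomies (infinitely many $a_k\ge 2$ versus $a_k=1$ eventually, and the two regimes defining $r_k$ and $t_k$), both $\mathcal K$-conditions hold simultaneously. The crucial elementary observation is that when $q_{k+1}$ is already known to be comparable to $(2+\eps)q_k$, failure of the first regime (i.e., $q_{k+1}\le(1+\eps)(|U_k|+q_k)$) forces $|U_k|>(1+\eps)q_k$, which is precisely the input needed for the second regime.
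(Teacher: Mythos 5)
Your proposal is correct and follows essentially the same route as the paper's proof: it rests on the same ingredients (Lemmas 2.3 and 2.4, the two regimes in the definitions of $r_k$ and $t_k$, and the estimates \eqref{30} and \eqref{31} with $|c|+1\le b^2$), with the only difference being that you nest the case analysis in the opposite order (splitting first on the partial quotients and then on the regime, rather than first on whether $q_{k+1}>(1+\eps)(|U_k|+q_k)$ holds infinitely often). The key elementary observation you isolate --- that failure of the first regime together with $q_{k+1}>(1+\eps)(2+\eps)q_k$ forces $|U_k|>(1+\eps)q_k$ --- is exactly the computation used in the paper.
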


\begin{proof} 
Let $k$ be such that
\begin{equation}
q_{k+1} > (1+ \eps) (|U_k| + q_k). \quad 
\label{52}
\end{equation}
Then, we have
$$
r_k =  |U_{k}|, \quad 
t_k = d_{k+1} q_k + q_{k-1}. 
$$
Thus,
$$
t_k = d_{k+1} q_k + q_{k-1} \ge q_{k+1} > (1+ \eps) (r_k + q_k)  > q_k+ \eps(r_k+q_k)
$$
and
$$
q_{k+1}  >  (1+ \eps) (|U_k| + q_k) =  (1+ \eps) (r_k + q_k).
$$
Consequently, the lemma holds true when \eqref{52} holds for infinitely many $k$. 

Assume now that there exists $k_0$ such that $q_{k+1} \le  (1+ \eps)(|U_k| + q_k)$  for all $k > k_0$.
Then, for $k > k_0$, we have 
$$
r_k = 0, \qquad 
t_k = |U_k|.
$$
If $a_k = 1$ for all large $k$, then we have $q_{k+1} >  (1+ \eps) q_k$ for all large $k$  
and by Lemma 2.4, there are infinitely many $k$'s such that $t_k = |U_k| >  (1+ \eps) q_k$.  
The remaining case is when 
 there are infinitely many $k$ with $a_k \ge 2$. 
It then follows from 
Lemma 2.3 that there are infinitely many $k$ such that 
$$q_{k+1}  > (1+\eps) (2+\eps) q_k.$$
For such an integer $k$, we have
$$
t_k = |U_k| \ge {q_{k+1} \over 1+\eps} - q_k > (1+ \eps) q_k = q_k + \eps (r_k + q_k)
$$
and
$$
q_{k+1} > (1+\eps) (2+\eps) q_k > (1+\eps) q_k = (1 + \eps) (r_k + q_k). 
$$
This shows that the set ${\mathcal K}$ is infinite. 

Moreover, for any $k$ in ${\mathcal K}$, it follows from \eqref{30} and \eqref{31} that there exist 
integers $m^0_k, m_k$ 
%and real numbers $\delta^0_k$, $\delta_k$ in $(-1, 1)$  
such that 
$$
\Bigl| \xi_{0} - {m^0_k \over b^{q_k} -1}  \Bigr|
\le { |c| + 1 \over b^{q_k+ q_{k+1}}}
\le { 1 \over b^{r_k + 2 q_k + \eps (r_k + q_k) - 2}}, 
$$
$$
\Bigl| \xi_{\bfx} - {m_k \over b^{r_k} ( b^{q_k} -1)} \Bigr|
\le  { |c| + 1 \over b^{r_k + q_k + t_k }}
\le { 1 \over b^{r_k + 2 q_k + \eps (r_k + q_k) - 2}}. 
$$
This completes the proof of the lemma. 
\end{proof}

\section{Proof of Theorem 2}

We start by establishing a weaker version of Theorem 2, namely we consider 
linear independence over the set of rational numbers.  
Keep the notations of Section 2. 

\begin{theorem}
The real numbers $1, \xi_0, \xi_1 = \xi_{\bfx}$ are linearly independent over the rationals
if and only if there does not exist an integer $j$ 
such that the intercept of ${\bfx}$ is congruent 
to $j \theta$ modulo one. 
\end{theorem}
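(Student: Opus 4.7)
The `only if' direction was already established in the paragraph following Theorem~2, so my plan is to prove the converse by contradiction. Suppose there exist integers $A,B,C$, not all zero, with $A+B\xi_0+C\xi_1=0$. Since every Sturmian number is irrational (by Ferenczi--Mauduit, in fact transcendental), if either $B$ or $C$ vanished the relation would force $(A,B,C)=(0,0,0)$, so I may assume $B,C\neq 0$. My overall strategy is to feed the simultaneous rational approximations of Lemma~2.5 into this relation and extract a rigid combinatorial constraint on the word $\bfx$.

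First, for each $k\in\mathcal{K}$ I substitute $\xi_0=m_k^0/(b^{q_k}-1)+E_k^0$ and $\xi_1=m_k/(b^{r_k}(b^{q_k}-1))+E_k$ into the linear relation and multiply through by $b^{r_k}(b^{q_k}-1)$, producing the integer
\[
N_k \;=\; A\,b^{r_k}(b^{q_k}-1)+B\,b^{r_k}m_k^0+C\,m_k \;=\; -\,b^{r_k}(b^{q_k}-1)\bigl(B E_k^0+C E_k\bigr).
\]
The bounds of Lemma~2.5 give $|N_k|\le(|B|+|C|)\,b^{2-q_k-\eps(r_k+q_k)}\to 0$, so $N_k=0$ for every sufficiently large $k\in\mathcal{K}$, equivalently the exact identity $B E_k^0+C E_k=0$ holds. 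Plugging the explicit expressions \eqref{30}--\eqref{31} into this identity and rearranging yields
\[
b^{q_{k+1}-r_k-t_k}\bigl(1+O(b^{-q_k+2})\bigr) \;=\; -\frac{B}{C}.
\]
Since the left-hand side is an integer power of $b$ and the right-hand side is a fixed nonzero rational, the exponent $q_{k+1}-r_k-t_k$ must be eventually constant along $\mathcal{K}$, equal to some integer $n_0$ with $b^{n_0}=-B/C$.

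The final step is to translate the identity $q_{k+1}-r_k-t_k=n_0$ into a statement about the intercept $\rho$. Unfolding the two regimes preceding Lemma~2.5: in the first regime the identity reads $(a_{k+1}-d_{k+1})q_k-|U_k|=n_0$, which, since $|U_k|\geq 1$ and $d_{k+1}\in\{a_{k+1},a_{k+1}+1\}$, forces $d_{k+1}=a_{k+1}$ together with $|U_k|=-n_0$ constant; in the second regime it reads $|U_k|=q_{k+1}-n_0$. Comparing with Proposition~1 applied to $\bfc_\theta$, where $U_k=M_{k+1}$ of length $q_{k+1}$, and using the uniqueness assertion of Proposition~1 together with the commutation relation \eqref{11}, I will conclude that $\bfx$ differs from $\bfc_\theta$ only by a shift of $n_0$ positions. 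Since $\bfc_\theta=\bfs_{\theta,\theta}$ and shifting a Sturmian word of slope $\theta$ by one position increases the intercept by $\theta$ modulo one, this forces $\rho\equiv(n_0+1)\theta\pmod{1}$, contradicting the hypothesis.

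The main obstacle is precisely this final combinatorial step: one must carefully track the matching between the Proposition~1 data of $\bfx$ and of $\bfc_\theta$ across both regimes of $\mathcal{K}$ and both signs of $n_0$, to promote the identity on $|U_k|$ to an actual shift relation. The Diophantine part of the argument, by contrast, follows directly from the approximation bounds in Lemma~2.5.
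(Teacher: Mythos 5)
Your overall route is the same as the paper's: dispose of the degenerate cases by irrationality of Sturmian numbers, feed the approximations of Lemma 2.5 into the putative relation, conclude that the error identity $B E_k^0+C E_k=0$ holds exactly for all large $k\in\mathcal{K}$, and then use \eqref{30}--\eqref{31} to force $q_{k+1}-r_k-t_k$ to be eventually constant, say $n_0$, with $b^{n_0}=-B/C$. This Diophantine half is correct as written, and your sign bookkeeping is consistent with the paper's conclusion (your $\rho\equiv(n_0+1)\theta$ is the paper's $(\pm p+1)\theta$ after matching $p=|n_0|$). The problem is exactly the step you defer: you announce that constancy of $q_{k+1}-r_k-t_k$ makes $\bfx$ a shift of $\bfc_\theta$, citing ``the uniqueness assertion of Proposition 1 together with \eqref{11}'', but uniqueness of the pair $(U_k,d_{k+1})$ for $\bfx$ says nothing by itself about how $\bfx$ relates to $\bfc_\theta$; no argument is actually given, and this is where the substantive content of the theorem lies. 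As submitted, the proposal has a genuine gap there.

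What must be supplied (and is what the paper does) is a short suffix/prefix case analysis of the length constraint, valid for infinitely many $k$. In your first regime one gets $d_{k+1}=a_{k+1}$ and $|U_k|=p:=-n_0>0$ constant; then $\bfx=U_k(M_k)^{d_{k+1}}M_{k-1}\cdots$ shows that $\sigma^p(\bfx)$ begins with $M_k$ for arbitrarily large $k$, hence $\sigma^p(\bfx)=\bfc_\theta$ (no constancy of $U_k$ itself is needed, and note suffixes of $M_k$ of fixed length do not stabilize, so one really must shift rather than identify $U_k$). In the second regime with $|U_k|=q_{k+1}+p$, $p\ge 0$, the fact that $U_k$ is a suffix of $M_kM_{k+1}$ gives $U_k=VM_{k+1}$ with $|V|=p$, so again $\sigma^p(\bfx)$ begins with $M_{k+1}$ and equals $\bfc_\theta$. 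In the second regime with $|U_k|=q_{k+1}-p$, $p\ge 0$, one has $VU_k=M_{k+1}$ with $V$ the length-$p$ prefix of $M_{k+1}$, which is eventually the fixed length-$p$ prefix of $\bfc_\theta$; then $V\bfx$ begins with $M_{k+1}$ for arbitrarily large $k$, so $V\bfx=\bfc_\theta$ and $\bfx=\sigma^p(\bfc_\theta)$. Only after these identifications does your intercept computation ($\bfc_\theta$ has intercept $\theta$, each shift adds $\theta$) produce $\rho\equiv(n_0+1)\theta \pmod 1$ and the contradiction. So: right strategy, correct Diophantine reduction, but the decisive combinatorial step must be written out; it is not long, but it cannot be replaced by an appeal to uniqueness in Proposition 1.
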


\begin{proof} 
%Recall that $\xi_0$ and $\xi_1$ are transcendental.    %%y (not used) 
Let $z_0, z_1$ be nonzero integers. 
For any $k$ in ${\mathcal K}$, it follows from Lemma 2.5 that
$$
\left\vert z_0 \xi_0 + z_1 \xi_1   %%d
 - {z_0 m^0_k b^{r_k} + z_1 m_k \over b^{r_k} (b^{q_k} - 1)}  \right\vert
< (| z_0| + | z_1|) b^{2 - 11 (r_k + q_k) /10},    
$$
If $z_0 \xi_0 + z_1 \xi_1$ is rational, then there exist infinitely many indices $k$ such that 
$$ 
z_0 \xi_0 + z_1 \xi_1   %%d
 = {z_0 m^0_k b^{r_k} + z_1 m_k \over b^{r_k} (b^{q_k} - 1)}, 
$$
that is, such that 
$$
b^{r_k + t_k } ( (-1)^k c z_0 + \delta^0_k z_0 b^{-q_k+2} )
+ b^{q_{k+1}} ((-1)^k c z_1 + \delta_k z_1 b^{-q_k+2} ) =0,
$$
by \eqref{30} and \eqref{31}. Since $b^{-q_k}$ tends to $0$ as $k$ tends to infinity, we have 
$$b^{r_k + t_k } z_0 + b^{q_{k+1}} z_1 =0,$$
when $k$ is sufficiently large. 
Thus, there is an integer $C$ such that for arbitrarily large $k$ 
$$
r_k + t_k = q_{k+1} + C. 
$$
Note that
$$ r_k +t_k= 
\begin{cases}
|U_k| + d_{k+1} q_k + q_{k-1}, \quad &{\rm if} \quad q_{k+1} > (1+ \eps) (|U_k| + q_k) , 
\\
|U_k|, \quad &{\rm if} \quad q_{k+1} \le(1+ \eps) (|U_k| +q_k) . 
\end{cases}
 $$
If $|U_k| + d_{k+1}q_k + q_{k-1} = q_{k+1} + C$
 for infinitely many $k$, then $d_{k+1} = a_{k+1}$ and $|U_k| = C$ for infinitely many $k$.
Otherwise, we deduce that 
$$
|U_k| = q_{k+1} + C 
$$
 for infinitely many $k$. We thus distinguish three cases.

If $|U_k| = p$ for some integer $p >0$  %%d
and infinitely many $k$,  %%d 
%with $q_{k-1} > p$, 
then $U_k$ is the suffix of $M_{k-1}$ of length $p$.  
For arbitrary large $k$ we have ${\bfx} = U_k M_k \dots ,$   
where $|U_k| = p$ and $U_k$ is a suffix of $M_{k-1}$.     
Therefore, if we denote by $\sigma$ the left shift map, we find that $\sigma^p ( {\bfx} ) = M_k \dots $ 
for arbitrary large $k$.  Thus, $\sigma^p(\bfx)$ coincides with  the characteristic Sturmian word $\bfc_\theta$.   %ml

If $|U_k| = q_{k+1} + p$ for some integer %%d
  $p \ge 0$ and infinitely many  $k$, then $U_k = V M_{k+1} $ 
for some $V$ where $|V| = p$ and $V$ is a suffix of $M_k$. 
Since ${\bfx} = U_k \dots = V M_{k+1} \dots $ for arbitrary large $k$,  
it follows that $\sigma^p ( {\bfx} ) = M_{k+1} \dots $ for arbitrary large $k$, 
which is also the characteristic Sturmian word $\bfc_\theta$. %ml  

If $|U_k| = q_{k+1} - p$ for some integer %%d
 $p\ge 0$ and infinitely many $k$, 
then $V U_k = M_{k+1} $ for some $V$ where $|V| = p$ and $V$ is a prefix of $M_{k+1}$. %ml
Since $V {\bfx} = V U_k \dots = M_{k+1} \dots $ for arbitrary large $k$,  
 we now find the relation $\sigma^p ( {\bfc_\theta} ) = {\bfx} $.   %ml

It follows 
that the intercept of ${\bfx}$ is congruent to $(\pm p +1)\theta$ modulo one. %ml

The other direction is clear.
\end{proof}

\medskip

We are in position to complete the proof of Theorem 2. 

By \cite{FeMa97}, we already know that $\xi_0$ and $\xi_1$ are transcendental. 
Let $\alpha_0$ and $\alpha_1$ be non-zero real algebraic numbers. 
Our strategy is to apply the Subspace Theorem \cite{Schm72,Schm80} 
to prove that the real number $\alpha_0 \xi_0 + \alpha_1 \xi_1$ is transcendental. 
We assume that it is algebraic and derive a contradiction. 

Set $C = b^2 (|\alpha_0| + |\alpha_1|)$. 
Let $k$ be in ${\mathcal K}$. 
Observe that, by Lemma 2.5, we have 
\begin{equation}
\Bigl| \alpha_0 \xi_0 + \alpha_1 \xi_1 - {\alpha_0 m_k^0 \over b^{q_k} - 1} 
- {\alpha_1 m_k \over b^{r_k} (b^{q_k } - 1)} \Bigr|
<  {C \over b^{r_k + 2 q_k + \eps (r_k + q_k)}}.    \label{55}
\end{equation}
Multiplying \eqref{55}  by $b^{r_k} (b^{q_k } - 1)$, we deduce that 
\begin{equation}
| (\alpha_0 \xi_0 + \alpha_1 \xi_1) b^{r_k + q_k} - (\alpha_0 \xi_0 + \alpha_1 \xi_1) b^{r_k} - \alpha_0 m_k^0 b^{r_k} - \alpha_1 m_k |
<  {C \over b^{q_k + \eps (r_k + q_k)}}.   \label{63} 
\end{equation}
Said differently, the linear form with algebraic coefficients
$$
L_4 (X_1, X_2, X_3, X_4) 
= (\alpha_0 \xi_0 + \alpha_1 \xi_1) X_1 - (\alpha_0 \xi_0 + \alpha_1 \xi_1) X_2 - \alpha_0 X_3 - \alpha_1 X_4
$$
takes small values at the integer quadruple
$
(b^{r_k + q_k}, b^{r_k}, m_k^0 b^{r_k}, m_k). 
$
Define 
$$
L_1 (X_1, X_2, X_3, X_4) = X_1, \quad 
L_2 (X_1, X_2, X_3, X_4) = X_2, \quad
L_3 (X_1, X_2, X_3, X_4) = X_3.
$$
Clearly, these four linear forms are linearly independent. 
For every prime divisor $\ell$ of $b$, set 
$$
L_{i,\ell} (X_1,X_2,X_3,X_4) = X_i, \quad 1\le i \le 4.
$$
Using the obvious estimate  $|m_k^0| \gg \ll b^{q_k}$ and \eqref{63}, observe 
that there exist $\delta > 0$ and infinitely many $k$ in ${\mathcal K}$ such that 
$$
\prod_{j = 1}^4 \prod_{\ell \mid b} \, |L_{j, \ell} (b^{r_k + q_k}, b^{r_k}, m_k^0 b^{r_k}, m_k)|_{\ell} \,
\prod_{j=1}^4 |L_j (b^{r_k + q_k}, b^{r_k}, m_k^0 b^{r_k}, m_k) | \le H_k^{- \delta},     
$$
where $| . |_\ell$ stands for the usual $\ell$-adic absolute value on $\Q$, and where we have set
$$
H_k = \max\{b^{r_k + q_k}, b^{r_k}, |m_k^0| b^{r_k}, |m_k| \}.
$$
Then,  the Subspace Theorem \cite{Schm72,Schm80} asserts that
there exists a proper rational subspace of $\Q^4$ 
containing infinitely many quadruples $(b^{r_k + q_k}, b^{r_k}, m_k^0 b^{r_k}, m_k)$ 
with $k$ in ${\mathcal K}$. In other words,  
 there exist integers $y_1, \ldots , y_4$, not all zero, 
and an infinite subset ${\mathcal K}_0$ of ${\mathcal K}$ such that 
\begin{equation}
y_1 b^{r_k + q_k} + y_2 b^{r_k} + y_3 m_k^0 b^{r_k} + y_4 m_k = 0,  \label{5} 
\end{equation}
for every $k$ in ${\mathcal K}_0$. 
Dividing \eqref{5} by $b^{r_k} (b^{q_k} - 1)$ and letting $k$ tend to infinity along ${\mathcal K}_0$, we get
$$
y_1 + y_3 \xi_0 + y_4 \xi_1 = 0.
$$
This shows that 1, $\xi_0$ and $\xi_1$ are linearly dependent over $\Q$, a contradiction 
with Theorem 3.   
Consequently, $\alpha_0 \xi_0 + \alpha_1 \xi_1$ is transcendental.

\section{Proof of Theorem 1}

Following \cite{Co, LN, LNB}, we  introduce the following function  $\phi$. 

\begin{definition}
Let $\lambda, \theta$ be real numbers in $(0, 1)$. 
Let $\phi : \R \mapsto \R$ be the real function of the real variable $y$ defined by the formula
$$
\phi(y) =\phi_{\lambda,\theta}(y)=  {\delta(\lambda,\theta)\over 1-\lambda}  +  (1-\lambda)
\sum_{k\ge 0} \lf y- (k+1) \theta\rf  \lambda^k,
$$
where the value  $\delta(\lambda,\theta)$ is given  by \eqref{delta}.   %%y 
 \end{definition}
A detailled study of the  function $\phi$ may be found in \cite{LNB}. 
It satisfies the functional equation
$$
\{ \phi(y+ \theta)\} = f\left( \{ \phi(y)\}\right) , \quad \forall y \in \R.
$$
We deal here with the special case $\mu=1$ in the setting  of \cite{LNB}. 

The function $\phi$ enables us to   parametrize the  points of our Cantor set 
$C_{\lambda,\delta(\lambda,\theta)}$ introduced in Section 1. 
In the sequel, we assume that $\theta$ is an irrational number. 
Observe that $\phi$ is an increasing right continuous function which has a left discontinuity 
only at the points $l \theta  + \Z$ for any positive integer $l$. 
We denote by $\phi(\{ l \theta\}^-) $ the left limit of $\phi$ at the point $\{l \theta\}$ for $l\ge 1$. 

\begin{lemma}
For any real number $\lambda$ and any irrational $\theta$ with $0 < \lambda , \theta <1$, 
we have the equality of sets
$$
C_{\lambda,\delta(\lambda,\theta)} = \overline{ \phi(I)} 
= \phi([0,1])  \, \cup \, \bigcup_{l\ge 1} \phi(\{ l\theta\}^-).
$$
Moreover, the explicit formulae
$$
\begin{aligned}
\phi(\{ l \theta\})  &={(1-\lambda^l)\delta(\lambda,\theta)\over 1-\lambda} 
-  \sum_{k=1}^l \lambda^{l-k}\left(  \lf k\theta \rf - \lf (k-1)\theta\rf \right),
\\
\phi(\{ l \theta\}^-)  & =\phi(\{ l \theta\})  - \lambda^{l-1}(1-\lambda),
\\
\phi(\{- l \theta\}) & = {1-\lambda^{-l}\over 1 -\lambda} \delta(\lambda,\theta) 
+ \lambda^{-l}+\sum_{k=1}^{l-1} 
\lambda^{k-l}\big( \lf (k+1)\theta\rf - \lf k\theta\rf\big),
\end{aligned}
$$
hold for any integer $l\ge 1$.
\end{lemma}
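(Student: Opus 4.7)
The plan is to derive everything from a single fundamental recursion
\[
\phi(y+\theta) = \lambda \phi(y) + \delta + (1-\lambda)\lfloor y \rfloor,
\]
together with the translation identity $\phi(y+1) = \phi(y) + 1$ and the normalization $\phi(0) = 0$. I would first verify these three identities directly from the defining series for $\phi$: the recursion by pulling the $k=0$ summand out of the series for $\phi(y+\theta)$ and reindexing $k \to k-1$ in the tail to recognize $\lambda(\phi(y)-\delta/(1-\lambda))$; the translation by pulling a $+1$ out of each $\lfloor y + 1 - (k+1)\theta \rfloor$ and summing a geometric series in $\lambda$; and $\phi(0)=0$ by matching the resulting series against the explicit expression for $\delta$ in \eqref{delta}.

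For the set equality, the functional equation $\{\phi(y+\theta)\} = f(\{\phi(y)\})$ noted in the text shows that $\phi$ semi-conjugates the rotation by $\theta$ on $\R/\Z$ to $f$ on $I$. Since monotonicity together with $\phi(0)=0$ and $\phi(1)=1$ give $\phi(I) \subset I$, the $f$-orbit of any $\phi(y)$ coincides with $\{\phi(y+n\theta)\}_{n\ge 0}$; density of $(y+n\theta)_{n \ge 0}$ modulo one then forces $\overline{\phi(I)}$ to be a closed $f$-invariant subset of $I$, which equals $C_{\lambda,\delta(\lambda,\theta)}$ by the dynamical results of \cite{LNB}. For the second equality, since $\phi$ is nondecreasing and right-continuous with left-discontinuities exactly at points $y$ where some $\lfloor y - (k+1)\theta\rfloor$ has an integer argument (that is, at $y \in \theta \N + \Z$), the discontinuity set of $\phi$ inside $I$ is exactly $\{\{l\theta\} : l \ge 1\}$, and $\phi$ is continuous from the left at $y=1$ by irrationality of $\theta$; hence $\overline{\phi(I)} = \phi([0,1]) \cup \bigcup_{l\ge 1}\phi(\{l\theta\}^-)$.

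For the explicit formulae, combining $\phi(\{l\theta\}) = \phi(l\theta) - \lfloor l\theta\rfloor$ with the forward iteration of the recursion from $\phi(0)=0$ gives
\[
\phi(l\theta) = \delta\,\frac{1-\lambda^l}{1-\lambda} + (1-\lambda)\sum_{k=1}^{l-1}\lambda^{l-1-k}\lfloor k\theta\rfloor,
\]
and an Abel-summation rearrangement (equivalently, the substitution $j = k+1$ in one copy of the sum) converts this into the claimed telescoping form involving $\lfloor k\theta\rfloor - \lfloor(k-1)\theta\rfloor$. The formula for $\phi(\{l\theta\}^-)$ follows by noticing that, among all summands $\lfloor y - (k+1)\theta\rfloor$ in the series for $\phi$, only the one with $k = l-1$ is discontinuous at $y=\{l\theta\}$ (since $(l-k-1)\theta \in \Z$ forces $k=l-1$ by irrationality of $\theta$); that single floor drops by one as $y$ decreases through $\{l\theta\}$, producing the jump $(1-\lambda)\lambda^{l-1}$. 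Finally, for $\phi(\{-l\theta\}) = \phi(-l\theta) + \lfloor l\theta\rfloor + 1$, I would iterate the inverted recursion $\phi(y) = \lambda^{-1}\bigl(\phi(y+\theta) - \delta - (1-\lambda)\lfloor y\rfloor\bigr)$ backward from $\phi(0)=0$, consistently applying $\lfloor -j\theta\rfloor = -\lfloor j\theta\rfloor - 1$, and then perform the same telescoping rearrangement.

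The main obstacle I anticipate is the routine but error-prone bookkeeping in the Abel-summation step that translates the direct iterated recursion (in terms of isolated values $\lfloor k\theta\rfloor$) into the telescoping form of the claim (in terms of differences $\lfloor k\theta\rfloor - \lfloor(k-1)\theta\rfloor$). The case $\phi(\{-l\theta\})$ is especially delicate because the negative exponents $\lambda^{k-l}$ interact with the identity $\lfloor -j\theta\rfloor = -\lfloor j\theta\rfloor - 1$, and one must track carefully how the geometric sum of the constant terms $-1$ combines with the additive correction $\lfloor l\theta\rfloor + 1$ to cancel the spurious $-1 + \lambda^{-l}$ piece coming from $\sum_{j=1}^l \lambda^{-j}$.
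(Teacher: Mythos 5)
Your plan is essentially correct, but it reaches the explicit formulae by a genuinely different route from the paper. You derive everything from the recursion $\phi(y+\theta)=\lambda\phi(y)+\delta+(1-\lambda)\lfloor y\rfloor$, the translation identity $\phi(y+1)=\phi(y)+1$, and $\phi(0)=0$ (all three do follow from the defining series exactly as you indicate), iterating forward for $\phi(\{l\theta\})$, reading the jump off the single discontinuous summand $k=l-1$ for $\phi(\{l\theta\}^-)$, and iterating backward for $\phi(\{-l\theta\})$. The paper instead imports the formulae for $\phi(\{l\theta\})$ and $\phi(\{l\theta\}^-)$ wholesale from Proposition~5 of \cite{LNB} and only computes $\phi(\{-l\theta\})$ by hand, via a direct telescoping manipulation of the series using $\lfloor \{-l\theta\}-k\theta\rfloor$ rewritten through $\lfloor -x\rfloor=-\lfloor x\rfloor-1$ and the identity $\sum_{k\ge1}(\lfloor(k+1)\theta\rfloor-\lfloor k\theta\rfloor)\lambda^k=\delta/(1-\lambda)-1$ coming from \eqref{delta}. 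Your route is more self-contained on this point, and the bookkeeping you worry about does close: $\sum_{k=1}^{l}\lambda^{l-k}\bigl(\lfloor k\theta\rfloor-\lfloor(k-1)\theta\rfloor\bigr)=\lfloor l\theta\rfloor-(1-\lambda)\sum_{k=1}^{l-1}\lambda^{l-1-k}\lfloor k\theta\rfloor$, so your iterated expression for $\phi(l\theta)$ minus $\lfloor l\theta\rfloor$ is exactly the stated formula, and the analogous check works for the backward iteration (e.g.\ for $l=1$ both give $(1-\delta)/\lambda$).

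One soft spot: for the set equality, knowing that $\overline{\phi(I)}$ is a closed $f$-invariant subset of $I$ does not by itself identify it with $C_{\lambda,\delta(\lambda,\theta)}$; closed invariant sets are not unique, and the semiconjugacy-plus-density argument only yields $\phi(I)\subseteq\bigcap_{n\ge1}f^n(I)$, hence $\overline{\phi(I)}\subseteq C$. The reverse inclusion is precisely what Proposition~5 of \cite{LNB} supplies, in the form $\bigcap_{n\ge1}f^n(I)=\phi(I)=I\setminus\bigcup_{l\ge1}\bigl[\phi(\{l\theta\}^-),\phi(\{l\theta\})\bigr)$, which is what the paper quotes. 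Since you defer to the dynamical results of \cite{LNB} anyway, this is an imprecision rather than a fatal gap, but you should invoke that statement in this exact form; your description of the closure (monotone, right-continuous, jumps exactly at the points $\{l\theta\}$, continuity at $1$ by irrationality of $\theta$) then gives the second equality just as in the paper.
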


\begin{proof}
It is established in Proposition 5 of \cite{LNB} that  
$$
\bigcap_{n\ge1}f^n(I)=  \phi(I) = I \setminus\bigcup_{l\ge  1}\big[\phi(\{ l \theta\}^-),\phi(\{ l \theta\})\big).
$$
We deduce that
$$
C_{\lambda,\delta(\lambda,\theta)} =\overline{ \phi(I)} 
= [0,1] \setminus\bigcup_{l\ge  1}\big(\phi(\{ l \theta\}^-),\phi(\{ l \theta\})\big)= 
\phi([0,1]) \, \cup \, \bigcup_{l\ge 1} \phi(\{ l\theta\}^-),
$$
since  $\phi$ is an increasing function with left discontinuities in $I$ at the points $\{ l\theta\}, \, l\ge 1$. 
The  formulae for $\phi(\{ l \theta\})$ and $\phi(\{ l \theta\}^-)$ arise from  Proposition 5 of \cite{LNB}.  
For $\phi(\{- l \theta\})$, write
$$
\begin{aligned}
\phi(\{- l \theta\})  & = {\delta(\lambda,\theta)\over 1-\lambda}  +  (1-\lambda)\sum_{k\ge 0} 
\lf \{-l\theta\} - (k+1) \theta\rf  \lambda^k
\\
=& {\delta(\lambda,\theta)\over 1-\lambda}  +  \sum_{k\ge 0} \big(\lf \{-l\theta\} - (k+1) \theta\rf  - 
\lf \{-l\theta\} - k \theta\rf \big)  \lambda^k
\\
=& {\delta(\lambda,\theta)\over 1-\lambda}  +  \sum_{k\ge 0} \big(\lf  - (k+l+1) \theta\rf  
- \lf -(k+l) \theta\rf \big)  \lambda^k
\\
=& {\delta(\lambda,\theta)\over 1-\lambda}  +  \lambda^{-l}\sum_{k\ge l} \big(\lf  - (k+1) \theta\rf  
- \lf -k \theta\rf \big)  \lambda^k
\\
=&{\delta(\lambda,\theta)\over 1-\lambda}  -  \lambda^{-l}\sum_{k\ge l} \big(\lf   (k+1) \theta\rf  
- \lf k \theta\rf \big)  \lambda^k
\\
=& {\delta(\lambda,\theta)\over 1-\lambda}  -  \lambda^{-l}\sum_{k\ge 1} \big(\lf   (k+1) \theta\rf  
- \lf k \theta\rf \big)  \lambda^k + \lambda^{-l}\sum_{k= 1}^{l-1} \big(\lf   (k+1) \theta\rf  
- \lf k \theta\rf \big)  \lambda^k
\\
=& {\delta(\lambda,\theta)\over 1-\lambda} - \lambda^{-l} 
\left( {\delta(\lambda,\theta)\over 1-\lambda}- 1\right) 
+\sum_{k= 1}^{l-1} \big(\lf   (k+1) \theta\rf  - \lf k \theta\rf \big)  \lambda^{k-l}
\\
=& {1 -\lambda^{-l}\over 1-\lambda} \delta(\lambda,\theta) + \lambda^{-l} + 
\sum_{k= 1}^{l-1} \big(\lf   (k+1) \theta\rf  - \lf k \theta\rf \big)  \lambda^{k-l}.
\end{aligned} 
$$
This completes the proof of the lemma. 
\end{proof}

We are now able to apply Theorem 2. Lemma 4.1 tells us that an arbitrary 
element $z$ in $C_{\lambda,\delta(\lambda,\theta)}$ can be written either in  the form  $z = \phi(y)$ 
for some real number $y$ with $0 \le y \le 1$, or  $z =\phi(\{ l \theta\}^-)$ for some integer $l\ge 1$. 
For any $y$ in $\R$, set 
$$
\bx_y = \big( \lc y+ (k+1)\theta \rc - \lc y+k\theta \rc\big)_{k\ge 1} \in \{ 0, 1\}^\N.
$$
The sequence $\bx_y$ is Sturmian with slope $\theta$ and intercept $y+\theta$ modulo one. 
Moreover, $\bx_0=\bfc_\theta$ is the characteristic Sturmian sequence of slope $\theta$ over $\{0,1\}$. Denote by
$$
\xi_{y, \lambda} = \sum_{k\ge 1} (\lc y+ (k+1)\theta \rc -\lc y+ k\theta \rc )\lambda^k
$$
the value at the point $ \lambda$ of the power series whose sequence of coefficients is equal to $\bx_y$. 
Then, we can write 
$$
\delta(\lambda,\theta) = (1-\lambda) (1+ \xi_{0, \lambda}),
$$
noting that  $\lc x \rc = \lf x \rf +1$ for any $x$ in $\R \setminus \Z$, and
$$
\begin{aligned}
\phi(y) =  \phi_{\lambda,\theta}(y)
= & {\delta(\lambda,\theta)\over 1-\lambda}  +  (1-\lambda)\sum_{k\ge 0} 
\lf y- (k+1) \theta\rf  \lambda^k.
\\
= & {\delta(\lambda,\theta)\over 1-\lambda}   + \lf y- \theta \rf + \sum_{k\ge 1}\big( \lf y-(k+1)\theta \rf - 
\lf y- k \theta \rf\big)\lambda^k
\\
=& {\delta(\lambda,\theta)\over 1-\lambda}  
 + \lf y- \theta \rf - \sum_{k\ge 1}\big( \lc -y+ (k+1)\theta \rc- \lc -y
+ k \theta \rc\big)\lambda^k
\\
=& 1 +\xi_{0, \lambda} +  \lf y-\theta \rf  - \xi_{-y, \lambda},
\end{aligned}
$$
since $\lf x \rf = -\lc - x \rc $ for any $x$ in $\R$.  We immediately deduce from the above formulae that
$$
\phi(0) = 0 \quad{\rm and} \quad \phi(1)= 1.
$$
When $\lambda= 1/b$ for an integer $b \ge 2$  and $y$ is not congruent modulo one
to some integer multiple $m\theta$ of $\theta$, 
the three numbers  $1,\xi_{0, \lambda}, \xi_{-y, \lambda}$ are linearly independent 
over the field of algebraic numbers by Theorem 2. Therefore, the non-zero linear combination
$$
\phi(y)= 1 + \xi_{0, \lambda} + \lf y-\theta \rf  - \xi_{-y, \lambda}
$$
is a transcendental number. It remains to deal with points $y$  in $(0,1)\cap (\Z \theta + \Z)$. 
These points have the form $y = \{ m\theta\}$,  for some  non-zero integer $m$. 
Observe that, by a result of \cite{LN} quoted in Section 1, 
the real number $\delta(1/b, \theta)$ is transcendental for any integer $b\ge 2$ 
and any irrational number $\theta$. Using the explicit formulae from the lemma, write
$$
\phi(\{ m\theta\})= {1-b^{-m}\over 1-b^{-1}} \, \delta(1/b,\theta)+ A_m, 
$$
where 
$$
A_m =\begin{cases}   -  \sum_{k=1}^m b^{k-m}\left(  \lf k\theta \rf - \lf (k-1)\theta\rf \right),
\quad  {\rm if} \quad  m \quad { \rm is \, \, positive}, 
\\
b^{-m}+\sum_{k=1}^{-m-1} b^{-k-m}\big( \lf (k+1)\theta\rf - \lf k\theta\rf\big),     \quad   {\rm if } 
\quad m \quad {\rm is \,\, negative},
\end{cases} 
$$
is a rational number. It follows that $\phi(\{ m\theta\})$ is a transcendental number, 
since the coefficient $(1-b^{-m})/(1-b^{-1})$  is a non-zero rational number. 
 Finally, $\phi(\{l \theta\}^-) = \phi(\{ l\theta\}) -\lambda^{l-1}(1-\lambda)$ is also transcendental. 
 Theorem 1 is proved.

\vskip 5mm

\vskip 2cm

{\footnotesize

YANN BUGEAUD (yann.bugeaud@math.unistra.fr) :  Universit\'e de Strasbourg, IRMA, CNRS, UMR 7501, 7 rue Ren\'e Descartes, 67084, Strasbourg, France.

DONG HAN KIM (kim2010@dongguk.edu) : Department of Mathematics Education, Dongguk University - Seoul, 30 Pildong-ro 1-gil, Jung-gu, Seoul, 04620 Korea.

MICHEL LAURENT (michel-julien.laurent@univ-amu.fr) : Aix-Marseille Universit\'e, CNRS, Centrale Marseille, Institut de Math\'ematiques de Marseille, 163 avenue de Luminy, Case 907, 13288, Marseille C\'edex 9, France.

ARNALDO NOGUEIRA  (arnaldo.nogueira@univ-amu.fr) : Aix-Marseille Universit\'e, CNRS, Centrale Marseille, Institut de Math\'ematiques de Marseille, 163 avenue de Luminy, Case 907, 13288, Marseille C\'edex 9, France. 
}

\end{document}